\numberwithin{equation}{section}
\newtheorem{theorem}{Theorem}[section]
\newtheorem{proposition}[theorem]{Proposition}
\newtheorem{lemma}[theorem]{Lemma}
\theoremstyle{definition}
\newtheorem{definition}[theorem]{Definition}
\newtheorem{remark}[theorem]{Remark}
\newcommand{\R}{\mathbb{R}}
\newcommand{\ds}{\displaystyle}
\newcommand{\eeq}{\end{equation}}
\newcommand{\beq}{\begin{equation}}
\newcommand{\lab}{\label}
\address[L.~L.~ Cui]{\newline\indent ~School of Mathematics and Statistics \& Hubei Key Laboratory of Mathematical Sciences,
central China Normal University,
Wuhan 430079, P. R. China.}
\email{\href{mailto:leileicuiccnu@163.com}{leileicuiccnu@163.com}}
\address[Q.~H.~He]{\newline\indent ~College of Mathematics and Information Science, Guangxi Center for Mathematical Research,
Guangxi University,
Nanning, 530003, P. R. China.}
\email{\href{mailto:heqihan277@163.com}{heqihan277@gxu.edu.cn}}
\address[Z.~Y.~Lv]{\newline\indent School of Mathematical Sciences, Beijing Normal University,
Beijing 100875,P. R. China.}
\email{\href{mailto:zongyanlv0535@163.com}{zongyanlv0535@163.com}}
\address[X.~X.~Zhong]{\newline\indent South China Research Center for Applied Mathematics and Interdisciplinary Studies,
South China Normal University,
Guangzhou 510631, P. R. China.}
\email{\href{mailto:zhongxuexiu1989@163.com}{zhongxuexiu1989@163.com}}
\begin{document}
\title
[ Normalized solutions for a Kirchhoff type equations]
{Normalized solutions for a Kirchhoff type equations with potential in $\mathbb{R}^3$ }
\maketitle

\begin{center}
\author{Leilei Cui$^{a}$, Qihan He$^{b,*}$, Zongyan Lv$^{c}$ and Xuexiu  Zhong$^d$}

\end{center}

\footnote{*:Corresponding author. The research was supported  by the Natural
Science Foundation of China (Nos.11801581,12061012), Guangdong Basic and Applied Basic Research Foundation (2021A1515010034),Guangzhou Basic and Applied Basic Research Foundation(202102020225), Province Natural Science Fund of Guangdong (2018A030310082).}

\begin{abstract}
In the  present paper, we study the existence of normalized solutions  to the following Kirchhoff type equations
\begin{equation*}
-\left(a+b\int_{\R^3}|\nabla u|^2\right)\Delta u+V(x)u+\lambda u=g(u)~\hbox{in}~\R^3
\end{equation*}
satisfying the normalized constraint $\displaystyle\int_{\R^3}u^2=c$,  where $a,b,c>0$ are prescribed constants, and the nonlinearities $g(u)$ are very general and of mass super-critical. Under some suitable assumptions on $V(x)$ and $g(u)$, we can prove the existence of ground state normalized solutions $(u_c, \lambda_c)\in H^1(\R^3)\times\mathbb{R}$, for any given $c>0$.  Due to the presence of the nonlocal term, the weak limit $u$ of any $(PS)_C$ sequence $\{w_n\}$ may not belong to the corresponding Pohozaev manifold, which is different from the local problem. So we have to overcome some
new difficulties  to gain the compactness of a $(PS)_C$ sequence.

\noindent{\bf Keywords:} Kirchhoff type equations, Normalized solutions, Mass super-critical, Pohozaev manifold

\noindent{\bf Mathematics Subject Classification(2020):} 35A01, 35B09,  35J60, 35Q51
\vskip0.23in
\end{abstract}
\section{\textbf{Introduction}}\label{sec-intro}
In this paper, we consider the existence of the ground state solutions to the following Kirchhoff type equations
\begin{equation}\label{problem}
-\left(a+b\int_{\R^3} |\nabla u|^2\right)\Delta u+V(x)u +\lambda u=g(u)~~\text{in}~~\R^3,
\end{equation}
with the $L^{2}$-mass constraint
\begin{equation}\label{eq:20210816-constraint}
\int_{\R^3}|u|^2\mathrm{d}x=c,
\end{equation}
where $a,b,c>0$ are prescribed constants.

Equation \eqref{problem} stems from the stationary analog of the equation
\begin{equation}\label{sec-1-origin}
  \rho \frac{\partial^2 u}{\partial t^2}-\left( \frac{P_0}{h}+\frac{E}{2L}\int_0^L |\frac{\partial u}{\partial x}|^2\mathrm{d}x\right)\frac{\partial^2 u}{\partial x^2}=0,
\end{equation}
which was presented by G. Kirchhoff in \cite{Ki}, as an extension of the classical D'Alembert's wave equations, describing free vibrations of elastic strings.
After the pioneer work of J.L. Lions \cite{JLLions1978}, where a functional analysis approach was proposed, the Kirchhoff type equations began to call attention of researchers.

In Equation \eqref{problem}, if $\lambda\in\mathbb{R}$ is fixed, then we call \eqref{problem} the {\it fixed frequency problem}.
There are various mathematical skills to find critical points of the corresponding energy functional $J_{\lambda}(u)$, including traditional constrained variational method, fixed point theorem and Lyapunov-Schmidt reduction, where
\begin{equation}\label{sec-1-J_lambda}
J_{\lambda}(u)=\frac{a}{2}\|\nabla u\|_2^2+\frac{b}{4}\|\nabla u\|_2^4 +\frac{1}{2} \int_{\R^3} (\lambda+V(x))u^2\mathrm{d}x-\int_{\R^3}G(u)\mathrm{d}x
\end{equation}
and $G(s)=\int_0^sg(t)\mathrm{d}t$. For example, for the case of $V(x)=0$, in \cite{G} and \cite{Y4}, positive ground state for equation \eqref{problem} is obtained. We refer to \cite{ac,HL,Wu} for nontrivial solutions. For the bounded states, we refer to \cite{H,HLP}. And the nodal solutions can be seen in \cite{DPS}. Additionally, interested readers can refer to \cite{13,14} for the existence and local uniqueness of multi-peak solutions constructed by using reduction method in the case of $\lambda=0$ and $g(u)$ satisfying subcritical growth. Some other results related to  the  multiplicity and   concentration of solutions to   Kirchhoff equations can be seen in \cite{F, hz12, WT}.
\vskip 0.1cm

Nowadays, physicists are more interested in solutions satisfying the normalization condition, that is, satisfying the $L^{2}$-mass constraint \eqref{eq:20210816-constraint}. From such a point of view, the mass $c>0$ is prescribed, while the frequency $\lambda$ is unknown and will appear as a Lagrange multiplier. Hence, we call \eqref{problem}--\eqref{eq:20210816-constraint} {\it fixed mass problem} and the solution $u$ is called a {\it normalized solution}. Normalized solutions of \eqref{problem} can be searched as critical points of $J(u)$ constrained on $S_c$, where
\begin{equation}\label{sec-1-J(u)}
J(u)=\frac{a}{2}\|\nabla u\|_2^2+\frac{b}{4}\|\nabla u\|_2^4 +\frac{1}{2} \int_{\R^3} V(x)u^2\mathrm{d}x-\int_{\R^3}G(u)\mathrm{d}x
\end{equation}
and
\begin{equation*}
S_{c}=\{u\in H^1(\R^3): \int_{\R^3} u^2=c\}.
\end{equation*}

When  $V(x)=0$, problem \eqref{problem}--\eqref{eq:20210816-constraint} is a special case ($N=3$) of the following eigenvalue problem
\begin{equation}\label{1.1}
-\left(a+b\ds\int_{\R^N}|\nabla u|^2\right)\Delta u +\lambda u=g(u)~~\text{in}~~\R^N,
\end{equation}
with prescribed $L^{2}$-mass constraint \eqref{eq:20210816-constraint}. For the nonlinearities $g(u)=|u|^{p-2}u$ and more general space dimensions, problem \eqref{1.1} has been widely studied in a sequence of literatures such as \cite{Y1,Y2,Y3}. Specially, Ye in \cite{Y1} considered the following constrained variation problem
\begin{equation}\label{eq1.2}
E_c:=\inf_{u\in S_c}I(u),
\end{equation}
where
\begin{equation}\label{sec-2-I(u)}
I(u):=\frac{a}{2}\|\nabla u\|_2^2+\frac{b}{4}\|\nabla u\|_2^4- \int_{\R^N} G(u)\mathrm{d}x
\end{equation}
with $N=1,2,3.$ By a scaling technique and applying the concentration-compactness principle, Ye proved that there exists $c_p^*\geq 0$, such that $E_c$ is attained if and only if $c>c_p^*$ with $0<p\leq 2+\frac{4}{N}$, or $c\geq c_p^*$  with $2+\frac{4}{N}<p<2+\frac{8}{N}$. The author also obtained that there is no minimizers for problem (\ref{eq1.2}) if $p\geq 2+\frac{8}{N}$. In particular, for the case of $2+\frac{8}{N}<p<2^*$, $E_c=-\infty$. However, a mountain pass critical point was achieved for the functional $I[u]$ constrained on $S_c$. In \cite{Y2}, Ye studied the case of $p=2+\frac{8}{N}$ and proved that there is a mountain pass critical point for the functional $I[u]$ on $S_c$ if $c>c^*$. Also, if $0<c<c^*$, the existence of minimizers for problem \eqref{eq1.2} was established by adding a new perturbation functional on the functional $I[u]$. After that, in \cite{Y3},  Ye also studied the asymptotic behavior of critical points of $I[u]$ on $S_c$ with $p=2+\frac{8}{N}$. Zeng and Zhang \cite{zx} improved the results of \cite{Y1} by scaling technique and energy estimate. They showed the existence and uniqueness of minimizers of (\ref{eq1.2}) with $0<p<2+\frac{8}{N}$, and the existence and uniqueness of the mountain pass type critical points on the $L^2$ normalized manifold for $2+\frac{8}{N}<p<2^*$ or $p=2+\frac{8}{N}$ and $c>c^*$. We point out that some other existence results of normalized solutions of equation (\ref{1.1}) are considered in \cite{lw,zc} and the references therein.
Recently, by using fiber maps and establishing some mini-max structure, He et al. \cite{HLZX} proved the existence and studied the asymptotic behavior  of ground state normalized solutions to problem \eqref{1.1}--\eqref{eq:20210816-constraint} with general nonlinearity $g(u)$ of mass super-critical for any given $c>0$ in the radial subspace $H^{1}_{\mathrm{rad}}(\mathbb{R}^N)(N=1,2,3)$. It is also worth mentioning that in \cite{JeanZhangZhong2021}, Jeanjean et al. developed a global branch approach to the normalized solutions of the following semi-linear Schrodinger equations
\beq\lab{eq:20220106-e1}
\begin{cases}
-\Delta u+\lambda u=g(u)~\hbox{in}~\R^N, N\geq 1,\\
u>0, \int_{\R^N}u^2 \mathrm{d}x=c.
\end{cases}
\eeq
This new approach does not depend on the geometry of the energy functional. So that, they can handle the nonlinearities $g$ in a unified way, which are either mass subcritical, mass critical or mass supercritical.
In spite of the work \cite{JeanZhangZhong2021}, Zeng et al. \cite{ZZZZ2021} prove the existence, non-existence and multiplicity of positive normalized solutions $(\lambda_c, u_c)\in \R\times H^1(\R^N)$ to \eqref{1.1}-\eqref{eq:20210816-constraint} with nonlinearities $g(s)$ in a unified way and for all dimensions $N\geq 1$. Due to the presence of the non-local term $b\int_{\R^N}|\nabla u|^2\mathrm{d}x \Delta u$,  problem \eqref{1.1} lacks the mountain pass geometry in the higher dimension case $N\geq 5$. The results of \cite{ZZZZ2021} is the first attempt in the normalized solution of Kirchhoff problem in high dimension.

 Additionally, Ding and Zhong \cite{ZD} considered the following semi-linear Sch\"{o}dinger equation with potential and general nonlinear term
\begin{equation}\label{semi-problem}
-\Delta u+V(x)u +\lambda u=g(u)~~\text{in}~~\R^N,
\end{equation}
with the $L^{2}$-mass constraint \eqref{eq:20210816-constraint} and $N\geq 3$. Ding and Zhong \cite{ZD} established the existence result of ground state normalized solutions to problem \eqref{semi-problem}--\eqref{eq:20210816-constraint} in a Pohozaev manifold with the corresponding energy functional satisfying a mini-max structure. What's more, the workable space needed is just $H^{1}(\mathbb{R}^N)$ rather than radial $H^{1}_{\mathrm{rad}}(\mathbb{R}^N)$.
\vskip 0.1cm

Motivated by \cite{HLZX} and \cite{ZD}, we intend to consider the existence of ground state normalized solutions to problem \eqref{problem} with $L^{2}$-mass constraint \eqref{eq:20210816-constraint}. Since $V(x)$ is allowed to be non-radial, we can not work in the radial subspace. So in this work, we can neither apply the similar argument as \cite{ZZZZ2021}, nor benefit from the results of \cite{JeanZhangZhong2021}. We shall apply the constraint variational approach here.
As far as we know, there seems that no literatures of problem \eqref{problem}--\eqref{eq:20210816-constraint} involve the case of mass super-critical with general nonlinearities and $V(x)\leq 0$, especially when $g(u)$ is nonhomogeneous and of mass super-critical. In fact, as defined by \eqref{eq1.2},
$E_c$ is not well-defined, since $J(u)$ defined in \eqref{sec-1-J(u)} is unbounded from below on $S_c$.

We emphasize that the compactness argument here is much more complicated than that in \cite{HLZX}, since we can not benefit from the radial embedding like \cite{HLZX}.
On the other hand, due to the presence of the nonlocal term, the weak limit $u$ of any $(PS)_C$ sequence $\{w_n\}$ may not belong to the corresponding Pohozaev manifold, which is different from the local problem \eqref{semi-problem}. So the compactness argument is also more complicated than that in \cite{ZD}. Hence, we believe that these kind of
mathematical difficulties will make this study  interesting.

\vskip 0.1cm
Before   stating our results, we give some assumptions on $g$ and $V(x)$ as follows:

(G1) $g:\R \to \R$ is continuous and odd.

(G2) There exist constants $\alpha,\beta \in \R^{+}$ with $\frac{14}{3}<\alpha< \beta<6$ such that
\begin{equation}\label{G2}
0<\alpha G(s)\leq g(s)s \leq \beta G(s) \quad \text{for}~ s \in \R\backslash \{0\},
\end{equation}
where $G(s)=\int_0^sg(t)\mathrm{d}t$.

(G3) The function defined by $ \widetilde{G}(s):=\frac{1}{2} g(s)s-G(s) $ is of class $C^{1}$ and satisfies
\begin{equation}\label{G3}
\widetilde{G}'(s)s >\frac{14}{3}\widetilde{G}(s) \quad \text{for}~s \in \R\backslash \{0 \}.
\end{equation}

(V1) $\lim\limits_{|x|\to+\infty}V(x)=\sup\limits_{x\in \R^3} V(x) =0$ and there exist some $\sigma_1 \in [0, \frac{3(\alpha-2)-4}{3(\alpha-2)}a)$  such that
\begin{equation}\label{V1}
\left|\int_{\R^3} V(x)u^2\mathrm{d}x\right|\leq\sigma_1\|\nabla u\|_2^2,~\text{for all}~u\in H^1(\R^3).
\end{equation}

(V2) $\nabla V(x)$ exists for a.e. $x\in \R^3$, put $W(x):=\frac{1}{2}\langle\nabla V(x),x\rangle$. There exists some $\sigma_2\in[0, \min\{\frac{3(\alpha-2)(a-\sigma_1)}{4}-a, \frac{6-\beta}{2\beta}a\}]$ such that
\begin{equation}\label{V2}
\left|\int_{\R^3} W (x)u^2\mathrm{d}x\right|\leq\sigma_2\|\nabla u\|_2^2,~\text{for all}~u\in  H^1(\R^3).
\end{equation}

(V3) $\nabla W(x)$ exists for a.e. $ x\in \R^3$, let
\begin{equation*}
\Upsilon (x):=4W(x)+\langle\nabla W(x),x\rangle.
\end{equation*}
There exists some $\sigma_3 \in [0, 2a) $ such that
\begin{equation}\label{V3}
\int_{\R^3} \Upsilon_{+} (x) u^2\mathrm{d}x\leq\sigma_3\|\nabla u\|_2^2,~\text{for all}~u \in  H^1(\R^3).
\end{equation}

\begin{remark}
\begin{itemize}
\item[(i)]We remark again that the variational framework fails in high dimension case due to the presence of the non-local term $b\int_{\R^N}|\nabla u|^2\mathrm{d}x \Delta u$.
    On the other hand, it seem very hard to find examples $V(x)$ admitting (V1)--(V3) for $N=1,2$. Hence, we only consider $N=3$ in present paper.
\item[(ii)]
By direct computations, we can see that $g(s)=\sum\limits_{j=1}^m|s|^{p_j-2}s, p_j\in (\frac{14}{3}, 6)$ satisfies (G1), (G2) and (G3). When $||V||_\frac{3}{2}, ||W||_\frac{3}{2}$ and $ ||\Upsilon_{+} ||_\frac{3}{2}$ are small enough, by Sobolev inequality, we can see that (V1), (V2) and (V3) hold. For example,
if $S||V||_\frac{3}{2}<\frac{3(\alpha-2)-4}{3(\alpha-2)}a$, then
$$|\int_{\R^3}V(x)u^2~dx|\leq ||V||_\frac{3}{2}||u||_{6}^2\leq \sigma_1||\nabla u||_2^2, $$
where $S$ denotes the best Sobolev constant of $D^{1,2}(\R^3)\hookrightarrow L^{6}(\R^3)$ and  $\sigma_1:=S ||V||_\frac{3}{2}$. Another important application is the Hardy potential $V(x)=-\frac{\mu}{|x|^2}$ with suitable small constant $\mu$.
\end{itemize}
\end{remark}

\begin{definition}\lab{def-ground-state}
For any $c>0$, a solution $(u_c,\lambda_c)\in H^1(\R^3)\times \R$ to \eqref{problem}-- \eqref{eq:20210816-constraint} is called a ground state normalized solution, or least energy normalized solution, if
\begin{equation*}
J(u)=\min\left\{J(v): v\in S_c~\hbox{and it solves \eqref{problem} for some $\lambda\in \R$}\right\}.
\end{equation*}
\end{definition}
\vskip 0.1cm
Our main results are given as follows.

\begin{theorem}\label{th1}
 Assume that $g(s)$ satisfies the conditions (G1)-(G3) and $V(x)$ satisfies the hypotheses (V1)-(V3). Then for any $c>0$, problem \eqref{problem}--\eqref{eq:20210816-constraint} admits a ground state normalized solutions $(u_c,\lambda_c)\in S_c\times\R^+$.
\end{theorem}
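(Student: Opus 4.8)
The plan is to realize the ground state as a mountain-pass-type critical point of $J$ restricted to $S_c$, using the fiber-map structure developed in \cite{HLZX} adapted to the potential setting of \cite{ZD}. For $u\in S_c$ and $t\in\R$ write the $L^2$-preserving dilation $(t\star u)(x):=e^{3t/2}u(e^t x)$, so that $\|\nabla(t\star u)\|_2^2=e^{2t}\|\nabla u\|_2^2$. First I would analyze the function $t\mapsto J(t\star u)$; using (G2) (mass super-criticality, $\alpha>\tfrac{14}{3}$) together with the control of the potential terms by $\sigma_1,\sigma_2$ in (V1)--(V2), one shows that this fiber map has a unique critical point $t_u$, which is a strict global maximum, and that $t_u$ depends continuously (indeed in a $C^1$ fashion) on $u$. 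This furnishes the Pohozaev-type manifold $\mathcal{P}_c:=\{u\in S_c: \frac{d}{dt}J(t\star u)\big|_{t=0}=0\}$, and $\inf_{\mathcal{P}_c}J$ is the candidate ground state level $m(c)$. The constraints on $\sigma_1,\sigma_2,\sigma_3$ are exactly what is needed to guarantee $m(c)>0$ and that $J$ is coercive and bounded below on $\mathcal{P}_c$ (the $\widetilde{G}$ term, controlled via (G3) and $\Upsilon_+$ via (V3), enters here).

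Next I would build a Palais--Smale sequence at level $m(c)$ for $J|_{S_c}$ that \emph{additionally} satisfies the Pohozaev identity asymptotically; the standard device is to apply Ekeland's variational principle to the functional $\widetilde{J}(u):=J(t_u\star u)$ on $S_c$, or equivalently a minimax over paths, transferring the minimizing sequence back to $S_c$ via the $\star$-action, exactly as in \cite{ZD,HLZX}. This yields $\{w_n\}\subset S_c$ with $J(w_n)\to m(c)$, $J|_{S_c}'(w_n)\to 0$, and $P(w_n)\to 0$, where $P$ is the Pohozaev functional. Using (G2)--(G3) and (V1)--(V3), $\{w_n\}$ is bounded in $H^1(\R^3)$; the associated Lagrange multipliers $\lambda_n$ are bounded, and from the Pohozaev and Nehari-type relations one extracts $\liminf\lambda_n>0$, which is what forces $\lambda_c>0$ and prevents vanishing of the weak limit. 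Pass to a subsequence with $w_n\rightharpoonup u$ in $H^1$.

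The main obstacle—and the novelty the authors advertise—is the compactness step. In the local problem \cite{ZD} one knows the weak limit $u$ lies on the Pohozaev manifold and argues by a "no dichotomy" splitting; here the nonlocal coefficient $\big(a+b\|\nabla w_n\|_2^2\big)$ converges to $a+b A$ with $A\ge\|\nabla u\|_2^2$, so $u$ solves a limit equation with the \emph{wrong} coefficient and need not lie on $\mathcal{P}_c$. The plan is: (i) show $\|w_n\|_2\to\|u\|_2$, i.e. no mass escapes to infinity—using $V(x)\to0$ at infinity (V1), the strict monotonicity of $c\mapsto m(c)$, and a subadditivity inequality $m(c)<m(c_1)+m_\infty(c-c_1)$ for $0<c_1<c$, where $m_\infty$ is the level of the problem at infinity with $V\equiv 0$; the gain from the potential being $\le 0$ and vanishing at infinity gives the strict inequality that rules out dichotomy. (ii) Once $\|u\|_2=c$, so $u\in S_c$, test the limit equation and compare energies: the key algebraic point is that if $\|\nabla w_n\|_2^2\to A>\|\nabla u\|_2^2$ then one can dilate $u$ by the $\star$-action to land on $\mathcal{P}_c$ at energy strictly below $m(c)$, a contradiction; this pins down $A=\|\nabla u\|_2^2$, hence strong $H^1$ convergence, $u\in\mathcal{P}_c$, $J(u)=m(c)$. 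Finally a regularity/positivity argument (working with $|u|$, which does not increase $J$ and preserves the constraint, then the strong maximum principle) upgrades $u$ to a positive ground state, and a lower-bound comparison shows any normalized solution has energy $\ge m(c)$, so $(u,\lambda_c)$ with $\lambda_c>0$ is a ground state in the sense of Definition~\ref{def-ground-state}.
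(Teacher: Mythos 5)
Your overall framework (the Pohozaev manifold $\mathcal{P}_c$, the fiber-map analysis giving a unique maximum $t_u$, the minimax level $\widetilde{m}(c)=\inf_{\mathcal{P}_c}J$, the comparison $\widetilde{m}(c)<m(c)$ with the problem at infinity, boundedness of the sequence and positivity of the multipliers) coincides with the paper's. The genuine gap is in your compactness step (i). You propose to exclude dichotomy by the strict subadditivity $m(c)<m(c_1)+m_\infty(c-c_1)$, which presupposes that any profile carrying mass $c-c_1$ to infinity contributes energy at least $m_\infty(c-c_1)$. But this is exactly what fails for the Kirchhoff problem, and it is the difficulty the paper is built around: each escaping profile $w^k$ solves the limit equation $-\bigl(a+bA^2\bigr)\Delta w^k+\lambda w^k=g(w^k)$ with the \emph{total} gradient limit $A^2$, not with $\|\nabla w^k\|_2^2$; hence $w^k$ does not lie on the Pohozaev manifold $\mathcal{P}_{\infty,\|w^k\|_2^2}$ of the functional $I$, and neither $I(w^k)$ nor $I_A(w^k)$ is directly comparable to $m(\|w^k\|_2^2)$. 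You acknowledge the ``wrong coefficient'' issue for the weak limit $u$, but your resolution of it (the dilation argument in step (ii)) only treats the case where no mass escapes; it does not repair the subadditivity argument for the escaping part, which is where the wrong coefficient actually bites.

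The paper closes this gap in Lemma \ref{sec-3-convergent}: after the profile decomposition \eqref{sec-3-11}--\eqref{sec-3-8} (note the bookkeeping terms $\frac{bA^4}{4}$ and $\frac{bA^2}{4}\|\nabla\cdot\|_2^2$, which must be tracked), the key estimate is $I_A(w^k)\ge m(\|w^k\|_2^2)+\frac{bA^2}{4}\|\nabla w^k\|_2^2$. Its proof is not a soft comparison: since $P_{\infty,A}(w^k)=0$ forces $P_\infty(w^k)<0$, one rescales each profile by some $t_{w^k}\in(0,1)$ to land on $\mathcal{P}_\infty$, and then uses the monotonicity encoded in (G3) (via \eqref{15.1}--\eqref{15.3}) to show that the relevant part of the energy does not increase under this down-scaling. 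Combined with $J_A(u)\ge\frac{bA^2}{4}\|\nabla u\|_2^2$ (Lemma \ref{sec-3-JA>}), the strict monotonicity of $m(\cdot)$ (Lemma \ref{lm12.1}) and $\widetilde{m}(c)<m(c)$ (Lemma \ref{lm3.8}), this forces $l=0$ and hence strong convergence. Without an argument of this type --- in particular, without using (G3) to relate the energy of a profile on the ``inflated'' manifold $P_{\infty,A}=0$ to the level $m$ of the genuine limit problem --- your step (i) does not go through. Your step (ii) is essentially sound once (i) is secured (indeed, passing to the limit in the two Pohozaev identities already yields $(A^2-\|\nabla u\|_2^2)(a+bA^2)=0$), and the final positivity/ground-state remarks are routine, but the theorem as stated only requires $\lambda_c>0$, not positivity of $u_c$.
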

\begin{remark}
 For any given mass $c>0$, we shall prove that $\widetilde{m}_c$ (see \eqref{var-m_c}) is achieved at some $u_c\in H^1(\R^3)$, and corresponds some $\lambda_c\in \R$. One can check that a solution $u\in H^1(\R^3)$ to \eqref{problem} satisfying $u\in S_c$ must belong to $\mathcal{P}_c$ (see Lemma \ref{Lemma2.1}). Hence, $u_c$ is a ground state normalized solution to \eqref{problem}.
\end{remark}

Before closing this part, we introduce our idea of the proof of Theorem \ref{th1}.
To prove Theorem \ref{th1}, we shall introduce one more constraint, denoted by $\mathcal{P}_c$ (see \eqref{sec-2-P_{C}-def}). We shall prove the new constraint $\mathcal{P}_c$ is natural (see Lemma \ref{lm3.4}). Then we are devoted to searching for the critical points of $J(u)$ (see \eqref{sec-1-J(u)}) on $\mathcal{P}_c$. We shall prove that the functional $J(u)$ possesses a mini-max structure (see Lemma \ref{lm3.7}), and the mini-max value, denoted by $\widetilde{m}_{c}$, coincides to the infimum of $J(u)$ constrained on $\mathcal{P}_c$. That is,
\begin{equation*}
\widetilde{m}_{c}:=\inf_{u\in S_c}\max_{t>0} J(t\star u)=\inf_{u\in \mathcal{P}_c}J(u),
\end{equation*}
where the fiber map $t\mapsto H^1(\R^3)$ is defined by $\displaystyle(t\star u)(x):=t^{\frac{3}{2}}u(tx)$, which preserves the $L^2$-norm. To get the convergence of the $(PS)_{\widetilde{m}_{c}}$ sequence $\{u_n\}$, we firstly apply a similar method to show that there exist a solution $u$ of $-(a+bA^2)\Delta u+V(x)u+\lambda u=g(u),$ and $l\in N$
nontrivial solutions $\omega^1, \omega^2, \cdots, \omega^l$ of $-(a+bA^2)\Delta u+\lambda u=g(u)$ such that \begin{equation}\label{bu4.51}\widetilde{m}_{c}+\frac{bA^4}{4}=J_A(u)+\sum\limits_{k=1}^lI_A(\omega^k)
\end{equation}
and
\begin{equation}\label{bu4.52}
A^2=||\nabla u||_2^2+\sum\limits_{k=1}^l||\nabla \omega^k||_2^2,
\end{equation}
where $A^2=\lim\limits_{n \to +\infty}||\nabla u_n||_2^2$,  $J_A(u)=\frac{a+bA^2}{2}\int_{\R^3}|\nabla u|^2+\frac{1}{2}\int_{\R^3}(V(x)+\lambda )u^2-\int_{\R^3}G(u)$ and
 $I_A(u)=\frac{a+bA^2}{2}\int_{\R^3}|\nabla u|^2+\frac{1}{2}\int_{\R^3} \lambda u^2-\int_{\R^3}G(u).$ We secondly prove that $J_A(u)\geq  \frac{bA^2}{4}||\nabla u||_2^2$ and $I_A(\omega^k)\geq m(||\omega^k||_2^2)+\frac{bA^2}{4}||\nabla \omega^k||_2^2, k=1, 2, \cdots, l$ which, combining the monotonicity of $m(c)$, \eqref{bu4.51} and \eqref{bu4.52}, implies that $l=0$. So we can see that the $(PS)_{\widetilde{m}_{c}}$ sequence $\{u_n\}$ is convergent in $H^1(\R^3).$ We want to emphasize that our methods allow  us not to ask any  monotonicity assumptions  for $V(x)+\frac{1}{4}<\nabla V(x), x>,$ which is one of our innovations.

\vskip 0.1cm
The paper is organized as follows. In Section \ref{sec-preli}, we give some preliminaries and facts. Specially, we construct the Pohozaev sub-manifold $\mathcal{P}_{c}$ and establish some properties of some type of mini-max structure. Then we prove Theorem \ref{th1} in Section \ref{sec-proof-th-1}.
\vskip 0.1cm

\section{\textbf{ Preliminaries}}\label{sec-preli}

 For convenience, we introduce some notations. Denote $u_{+}=\max(u,0)$ for $u\in \mathbb{R}$. For any $1\leq p\leq\infty$, $L^{p}(\mathbb{R}^3)$ is the space of real-valued Lebesgue measurable functions with finite $L^p$ norms $\|u\|_p<+\infty$. $H^{1}(\mathbb{R}^3)$ is the standard Hilbert space, defined as
\begin{equation*}
H^1(\R^3):= \{u\in L^2(\R^3): \nabla u \in L^2(\R^3)\},
\end{equation*}
and the norm in $H^1(\R^3)$ is denoted by $\|u\|_{H^{1}}$. $H^{-1}(\mathbb{R}^3)$ is the dual space of $H^{1}(\mathbb{R}^3)$. We will use $C$ and $C_j(j\in\mathbb N)$ to denote various positive constants, and $o_{n}(1)$ to mean $o_{n}(1)\to 0$ as $n\to 0$.
\vskip 0.2cm

In this section, we will collect some useful preliminary results for the subsequent proof of Theorem \ref{th1}. Firstly, we recall the well-known Gagliardo-Nirenberg inequality with the best constant (see in \cite{WM}):
Let $p\in [2,6)$, then for any $u\in H^{1}(\mathbb{R}^3)$, we have
\begin{equation}\label{G-N-I}
\|u\|_p^p\leq \frac{p}{2\|Q\|_2^{p-2}}\|\nabla u\|_2^{\frac{3p-6}{2}} \|u\|_2^{\frac{6-p}{2}},
\end{equation}
and, up to translations, $Q$ is the unique positive radial solution (we refer to \cite{Kwong1989} for the uniqueness) of
\begin{equation*}
-\frac{3p-6}{4}\Delta Q+\left(\frac{6-p}{4}\right)Q=|Q|^{p-2}Q~\text{in}~\R^3.
\end{equation*}
\vskip 0.1cm
Secondly, following from the assumptions (G1) and (G2), we deduce that for all $t \in \R$ and $s\geq0$,
\begin{equation*}
 \left\{ \begin{array}{ll}
 s^{\beta} G(t) \leq G(ts) \leq s^{\alpha} G(t), \quad \text{if}~ s \leq 1,&   \\
 s^{\alpha} G(t) \leq G(ts) \leq s^{\beta} G(t), \quad \text{if}~ s \geq 1.&    \\
\end{array} \right.
\end{equation*}
Moreover, there exist some constants $ C_1, C_2>0$ such that for all $s \in \R$,
\begin{equation}\label{1.5}
C_1 \min\{|s|^{\alpha},|s|^{\beta}\}\leq G(s)\leq C_2\max\{|s|^{\alpha},|s|^{\beta}\}\leq C_2 (|s|^{\alpha}+|s|^{\beta} ),
\end{equation}
and
\begin{equation}\label{1.51}
\left(\frac{\alpha}{2}-1\right)G(s)\leq\frac{1}{2}g(s)s-G(s)\leq \left(\frac{\beta}{2}-1\right)G(s)\leq \left(\frac{\beta}{2}-1\right)C_2(|s|^{\alpha}+|s|^{\beta}).
\end{equation}
\vskip 0.1cm
Then we introduce the so-called Pohozaev manifold. First, denote
\begin{equation*}
\mathcal{P}:=\{ u\in H^1 (\R^3): P(u)=0\}~~\text{and}~~\mathcal{P}_{\infty} := \{ u\in H^1 (\R^3):  P_{\infty} (u)=0 \},
\end{equation*}
where
\begin{equation}\label{Pohozaev}
P(u)=a\|\nabla u\|_2^2+b\|\nabla u\|_2^4-\int_{\R^3}W(x)u^2(x)\mathrm{d}x-3\int_{\R^3}\widetilde{G}(u)\mathrm{d}x,
\end{equation}
and
\begin{equation*}
P_{\infty}(u)=a\|\nabla u\|_2^2+b\|\nabla u\|_2^4-3\int_{\R^3}\widetilde{G}(u)\mathrm{d}x.
\end{equation*}
We can also define the Pohozaev sub-manifold as follows:
\begin{equation}\label{sec-2-P_{C}-def}
 \mathcal{P}_c:= S_c \cap \mathcal{P}  \quad \text{and} \quad \mathcal{P}_{\infty,c}:= S_c \cap \mathcal{P}_{\infty}.
\end{equation}
\vskip 0.1cm
Then, as usually,   we introduce the fiber map
\begin{equation*}
u(x)\mapsto(t\star u)(x):= t^{\frac{3}{2}} u(tx) \quad x\in \mathbb{R}^3,
\end{equation*}
for $(t,u)\in \R^{+}\times S_c$. Of course, one can easily check that for any $u\in H^{1}(\mathbb{R}^{3})$,
\begin{equation*}
\|t\star u\|_{2}^{2}=\|u\|_{2}^{2}~\text{and}~\|\nabla(t\star u)\|_{2}^{2}=t^{2}\|\nabla u\|_{2}^{2}.
\end{equation*}
It follows that $t\star u \in S_c$ for any $u\in S_{c}$. Define
\begin{equation*}
\Psi_{u}(t):=J(t\star u) \quad \text{and} \quad \Psi_{\infty,u}(t):=I(t\star u).
\end{equation*}
\vskip 0.1cm
\begin{remark}
By direct calculations, we can verify that for any $u\in H^{1}(\mathbb{R}^3)$,
\begin{equation}\label{sec-2-4}
P(u)=\frac{\mathrm{d}}{\mathrm{d}t}J(t\star u)\big|_{t=1}~\text{and}~P_{\infty}(u)=\frac{\mathrm{d}}{\mathrm{d}t}I(t\star u)\big|_{t=1}.
\end{equation}
\end{remark}
\vskip 0.1cm
Finally, we deduce some necessary lemmas and properties for the proof of Theorem \ref{th1}.
\vskip 0.1cm
\begin{lemma}\label{Lemma2.1}
Suppose that $u\in H^1(\R^3)$ is a weak  solution of \eqref{problem}, then $u\in \mathcal{P}$.
\end{lemma}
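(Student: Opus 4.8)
The plan is to establish the Pohozaev identity for weak solutions of \eqref{problem} by a standard scaling argument applied to the energy functional $J_\lambda$. First I would note that if $u \in H^1(\R^3)$ solves \eqref{problem}, then $u$ is a critical point of the functional
\begin{equation*}
\Phi(v) := \frac{a}{2}\|\nabla v\|_2^2 + \frac{b}{4}\|\nabla v\|_2^4 + \frac{1}{2}\int_{\R^3}(V(x)+\lambda)v^2\,\mathrm{d}x - \int_{\R^3}G(v)\,\mathrm{d}x,
\end{equation*}
so that $\langle \Phi'(u), \varphi\rangle = 0$ for all $\varphi \in H^1(\R^3)$. Testing against $u$ itself gives the Nehari-type identity
\begin{equation*}
(a+b\|\nabla u\|_2^2)\|\nabla u\|_2^2 + \int_{\R^3}(V(x)+\lambda)u^2\,\mathrm{d}x - \int_{\R^3}g(u)u\,\mathrm{d}x = 0,
\end{equation*}
which I would keep in reserve to combine with the dilation identity.

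Next I would introduce the dilation $u_t(x) := u(x/t)$ for $t>0$ (not the mass-preserving fiber map $t\star u$, but the plain rescaling), and compute each term of $\Phi(u_t)$ explicitly: $\|\nabla u_t\|_2^2 = t\|\nabla u\|_2^2$, $\int u_t^2 = t^3 \int u^2$, $\int G(u_t) = t^3 \int G(u)$, and $\int V(x)u_t^2\,\mathrm{d}x = t^3\int V(tx)u^2\,\mathrm{d}x$. Then $\frac{\mathrm{d}}{\mathrm{d}t}\Phi(u_t)\big|_{t=1} = 0$ because $u$ is a critical point and $t \mapsto u_t$ is a smooth curve in $H^1(\R^3)$ through $u$. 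Carrying out the differentiation and using that $\frac{\mathrm{d}}{\mathrm{d}t}\int V(tx)u^2\,\mathrm{d}x\big|_{t=1} = \int \langle \nabla V(x), x\rangle u^2\,\mathrm{d}x = 2\int W(x)u^2\,\mathrm{d}x$ yields
\begin{equation*}
\frac{a}{2}\|\nabla u\|_2^2 + \frac{b}{4}\cdot 2\|\nabla u\|_2^4 + \frac{3}{2}\int_{\R^3}(V(x)+\lambda)u^2\,\mathrm{d}x + \int_{\R^3}W(x)u^2\,\mathrm{d}x - 3\int_{\R^3}G(u)\,\mathrm{d}x = 0.
\end{equation*}
Multiplying by $2$ gives $a\|\nabla u\|_2^2 + b\|\nabla u\|_2^4 + 3\int(V(x)+\lambda)u^2 + 2\int W(x)u^2 - 6\int G(u) = 0$. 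Finally I would eliminate the $\lambda$-term and the $V$-term: from the Nehari identity, $\int(V(x)+\lambda)u^2 = \int g(u)u - (a+b\|\nabla u\|_2^2)\|\nabla u\|_2^2$; substituting and recalling $\widetilde{G}(s) = \frac{1}{2}g(s)s - G(s)$, the mixed terms collapse to exactly $P(u) = a\|\nabla u\|_2^2 + b\|\nabla u\|_2^4 - \int_{\R^3}W(x)u^2\,\mathrm{d}x - 3\int_{\R^3}\widetilde{G}(u)\,\mathrm{d}x = 0$, i.e. $u \in \mathcal{P}$.

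The main obstacle is regularity and rigor in justifying the Pohozaev identity rather than the algebra, which is routine. The formal scaling computation above requires either that $u$ be regular enough (say $u \in H^2_{loc}$ with enough decay) to integrate by parts against $x\cdot\nabla u$, or a careful approximation/truncation argument; the presence of the nonlocal Kirchhoff coefficient $(a+b\|\nabla u\|_2^2)$ is actually harmless since it is just a fixed positive constant once $u$ is frozen, so the equation is a standard semilinear elliptic equation with potential $V(x)+\lambda$ and nonlinearity $g$. I would invoke standard elliptic regularity (using (G1), (G2) to control the growth of $g$, and the assumed integrability of $V$) to get $u \in W^{2,p}_{loc}$, then apply the classical Pohozaev identity derivation (multiply the PDE by $x\cdot\nabla u$, integrate over $B_R$, let $R\to\infty$ using the decay of $u$), which reproduces the same relation. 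Care is needed with the term $\int \langle\nabla V, x\rangle u^2$, but assumption (V2) guarantees $\int W(x)u^2\,\mathrm{d}x$ is finite and well-defined, so no difficulty arises there. The upshot is that the differentiability-of-the-fiber argument and the classical multiplier argument give the same identity $P(u)=0$, completing the proof.
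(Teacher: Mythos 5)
Your proposal is correct and follows essentially the same route as the paper: combine the Nehari-type identity obtained by testing with $u$ against the Pohozaev identity for the frozen-coefficient equation $-(a+b\|\nabla u\|_2^2)\Delta u+(V+\lambda)u=g(u)$, then eliminate $\lambda$ to land on $P(u)=0$. The only difference is presentational — you derive the Pohozaev identity via the dilation $u(x/t)$ and then justify it by elliptic regularity and the classical $x\cdot\nabla u$ multiplier argument, whereas the paper simply invokes regularity and the identity directly — so no further comparison is needed.
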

\begin{proof} Assume that  $u\in H^1(\R^3)$ is a weak   solution of \eqref{problem}. By the standard regularity theory, we obtain that $u\in C^2(\R^3)$. So we have
\begin{equation}\label{test-u}
a\|\nabla u\|_2^2+b\|\nabla u\|_2^4+\int_{\mathbb{R}^3}(V(x)+\lambda)u^2\mathrm{d}x-\int_{\mathbb{R}^3}g(u)u\mathrm{d}x=0.
\end{equation}
Additionally, invoking by the Pohozaev identity, we also deduce that
\begin{equation}\label{Poho}
(a\|\nabla u\|_2^2+b\|\nabla u\|_2^4)+3\int_{\mathbb{R}^3}(V(x)+\lambda)u^2\mathrm{d}x-6\int_{\mathbb{R}^3} G(u)\mathrm{d}x+\int_{\mathbb{R}^3}\langle\nabla V(x), x\rangle u^2\mathrm{d}x=0.
\end{equation}
Eliminating the parameter $\lambda$ from the above equalities \eqref{test-u}--\eqref{Poho}, we conclude that
\begin{equation*}
a\|\nabla u\|_2^2 +b\|\nabla u\|_2^4 -\int_{\R^3}W(x)u^2\mathrm{d}x -3\int_{\R^3}\widetilde{G}(u)\mathrm{d}x=0.
\end{equation*}
The proof of Lemma \ref{Lemma2.1} is complete.
\end{proof}
\vskip 0.1cm
\begin{proposition}\label{bupro1}

(i) Let $u \in S_c $.  Then, $t\in \R^{+}$ is a critical point of $\Psi_{\infty,u}(t)=I(t\star u)$ if and only if $t \star u \in \mathcal{P}_{\infty,c}$.

(ii) Let $u \in S_c$. Then, $t\in \R^{+}$ is a critical point for $\Psi_{u}(t)=J(t\star u)$ if and only if $t \star u \in \mathcal{P}_{c}$.
\end{proposition}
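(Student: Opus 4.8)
The plan is to deduce both statements from the differential identity already recorded in \eqref{sec-2-4}, namely $P(u)=\frac{\mathrm{d}}{\mathrm{d}t}J(t\star u)\big|_{t=1}$ and $P_{\infty}(u)=\frac{\mathrm{d}}{\mathrm{d}t}I(t\star u)\big|_{t=1}$, combined with the semigroup property of the fiber action $u\mapsto t\star u$. Since parts (i) and (ii) are proved in exactly the same way — replacing $J,P,\mathcal{P},\mathcal{P}_c$ by $I,P_{\infty},\mathcal{P}_{\infty},\mathcal{P}_{\infty,c}$ — I would only write out part (ii) in detail and note that (i) follows verbatim.

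First I would record the elementary scaling facts: for $u\in H^1(\R^3)$ and $s,t>0$ one has $\|t\star u\|_2^2=\|u\|_2^2$, $\|\nabla(t\star u)\|_2^2=t^2\|\nabla u\|_2^2$, and the semigroup identity $s\star(t\star u)=(st)\star u$, all immediate from $(t\star u)(x)=t^{\frac{3}{2}}u(tx)$. In particular $t\star u\in S_c$ whenever $u\in S_c$, so $\Psi_u(t)=J(t\star u)$ is well-defined on $\R^{+}$ and it remains only to locate its critical points. Now fix $u\in S_c$ and $t_0>0$, put $v:=t_0\star u\in S_c$, and use the semigroup property to write $t\star u=(t/t_0)\star v$. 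Then $\Psi_u(t)=J\big((t/t_0)\star v\big)$, and differentiating at $t=t_0$ by the chain rule together with \eqref{sec-2-4} applied to $v$ gives
\begin{equation*}
\Psi_u'(t_0)=\frac{1}{t_0}\,\frac{\mathrm{d}}{\mathrm{d}s}J(s\star v)\Big|_{s=1}=\frac{1}{t_0}P(v)=\frac{1}{t_0}P(t_0\star u).
\end{equation*}
Since $t_0>0$, this shows $\Psi_u'(t_0)=0$ if and only if $P(t_0\star u)=0$, i.e. $t_0\star u\in\mathcal{P}$; and since $t_0\star u\in S_c$ automatically, this is equivalent to $t_0\star u\in\mathcal{P}_c=S_c\cap\mathcal{P}$. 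This is precisely (ii), and (i) follows in the same way with $I$, $P_{\infty}$, $\mathcal{P}_{\infty}$, $\mathcal{P}_{\infty,c}$.

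The only point requiring care is the legitimacy of the chain-rule step, namely that $t\mapsto\Psi_u(t)$ is of class $C^1$ on $\R^{+}$ and that \eqref{sec-2-4} is valid. This can be checked from the explicit expression
\begin{equation*}
\Psi_u(t)=\frac{a}{2}t^2\|\nabla u\|_2^2+\frac{b}{4}t^4\|\nabla u\|_2^4+\frac{1}{2}\int_{\R^3}V(x/t)u^2(x)\mathrm{d}x-t^{-3}\int_{\R^3}G\big(t^{\frac{3}{2}}u(x)\big)\mathrm{d}x,
\end{equation*}
where the third term is differentiable in $t$ under the hypotheses (V1)--(V2) on $V$, and the fourth is differentiable by (G1)--(G2) together with \eqref{1.5} and the Gagliardo--Nirenberg inequality \eqref{G-N-I}, which make $\int_{\R^3}G(t\star u)$ finite and allow $\frac{\mathrm{d}}{\mathrm{d}t}$ to pass under the integral by dominated convergence. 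Differentiating this expression and simplifying with $\widetilde{G}(s)=\frac{1}{2}g(s)s-G(s)$ in fact yields $\Psi_u'(t)=\frac{1}{t}P(t\star u)$ for all $t>0$, an identity one could equally well take as the starting point, bypassing the semigroup argument entirely. I expect this smoothness check and the verification of the scaling identity to be the only real content; once they are in place, the equivalence with membership in $\mathcal{P}_c$ (resp. $\mathcal{P}_{\infty,c}$) is immediate.
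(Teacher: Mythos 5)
Your proposal is correct and follows essentially the same route as the paper: the paper's proof also rests on the single identity $(\Psi_{u})'(t)=\frac{1}{t}P(t\star u)$ (and its analogue for $\Psi_{\infty,u}$), obtained "by direct calculations," from which the equivalence with membership in $\mathcal{P}_c$ (resp.\ $\mathcal{P}_{\infty,c}$) is immediate. Your derivation of that identity via the semigroup property $s\star(t\star u)=(st)\star u$ together with \eqref{sec-2-4}, and your remarks on the $C^1$ regularity of $\Psi_u$, merely make explicit what the paper leaves implicit.
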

\begin{proof}

 The conclusion (i) has been proved in \cite{HLZX}. But, for the readers' convenience, we give it a proof here.  By direct calculations, it yields $(\Psi_{\infty,u})'(t)=\frac{1}{t} P_{\infty}(t \star u)$, which implies that $(\Psi_{\infty,u}){^\prime}(t)=0$ is equivalent to $t \star u \in \mathcal{P}_{\infty,c} $. In other words, $t\in\R^{+}$ is a critical point of $\Psi_{\infty,u}(t)=I(t\star u)$ if and only if $t \star u \in \mathcal{P}_{\infty,c}$.

At the same time, a similar method can be used to prove the conclusion (ii). The proof of Proposition \ref{bupro1} is complete.
\end{proof}
\vskip 0.1cm
\begin{lemma}\label{lm2.2}
Under the assumptions (G1)--(G2) and (V1)--(V2), $J\big|_{\mathcal{P}_{c}}$ is coercive, that is,
\begin{equation*}
\lim_{u\in \mathcal{P}_{c}, \|\nabla u\|_2\rightarrow \infty}J(u)=+\infty.
\end{equation*}
\end{lemma}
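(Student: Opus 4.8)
The plan is to exploit the constraint $P(u)=0$ that defines $\mathcal{P}_c$ in order to eliminate the nonlinear integral $\int_{\R^3}G(u)$ from $J(u)$, thereby reducing $J|_{\mathcal{P}_c}$ to an expression which, after controlling the potential terms by $\|\nabla u\|_2^2$ via (V1)--(V2), is a quartic polynomial in $\|\nabla u\|_2$ with a \emph{positive} leading coefficient. Since the quartic term then dominates, coercivity is immediate.

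First I would record that for any $u\in H^1(\R^3)$ the integrals involved are finite: by \eqref{1.51} one has $0\le \widetilde{G}(s)\le C(|s|^\alpha+|s|^\beta)$ with $\alpha,\beta\in(2,6)$, and $H^1(\R^3)\hookrightarrow L^\alpha(\R^3)\cap L^\beta(\R^3)$. Next, for $u\in\mathcal{P}_c$ the identity $P(u)=0$ together with \eqref{Pohozaev} gives
\[
3\int_{\R^3}\widetilde{G}(u)=a\|\nabla u\|_2^2+b\|\nabla u\|_2^4-\int_{\R^3}W(x)u^2.
\]
On the other hand \eqref{1.51} (together with $G\ge 0$, a consequence of (G2)) yields the pointwise bound $G(s)\le \frac{2}{\alpha-2}\widetilde{G}(s)$, whence
\[
\int_{\R^3}G(u)\ \le\ \frac{2}{3(\alpha-2)}\Big(a\|\nabla u\|_2^2+b\|\nabla u\|_2^4-\int_{\R^3}W(x)u^2\Big).
\]
Plugging this into \eqref{sec-1-J(u)} and then using (V1) to bound $\tfrac12\int_{\R^3}V(x)u^2\ge-\tfrac{\sigma_1}{2}\|\nabla u\|_2^2$ and (V2) to bound $\tfrac{2}{3(\alpha-2)}\int_{\R^3}W(x)u^2\ge-\tfrac{2\sigma_2}{3(\alpha-2)}\|\nabla u\|_2^2$, one arrives at
\[
J(u)\ \ge\ \Big(\frac b4-\frac{2b}{3(\alpha-2)}\Big)\|\nabla u\|_2^4+\Big(\frac a2-\frac{2a}{3(\alpha-2)}-\frac{\sigma_1}{2}-\frac{2\sigma_2}{3(\alpha-2)}\Big)\|\nabla u\|_2^2
\]
for every $u\in\mathcal{P}_c$.

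The decisive point is that the coefficient of $\|\nabla u\|_2^4$ is strictly positive: $\frac14-\frac{2}{3(\alpha-2)}>0$ is equivalent to $\alpha>\frac{14}{3}$, which is exactly the lower bound imposed in (G2). (Moreover, the admissible ranges of $\sigma_1,\sigma_2$ in (V1)--(V2) are arranged precisely so that the coefficient of $\|\nabla u\|_2^2$ above is also nonnegative, although for coercivity it is enough that this coefficient be bounded below.) Consequently $J(u)\to+\infty$ as $u\in\mathcal{P}_c$, $\|\nabla u\|_2\to\infty$, which is the assertion. I do not expect a serious obstacle here: the only thing to be careful about is the sign bookkeeping of the $V$- and $W$-contributions, the whole content being the algebraic observation that the Pohozaev relation converts the "bad" term $-\int_{\R^3}G(u)$ into a quantity controlled by $\|\nabla u\|_2^2+\|\nabla u\|_2^4$, with the $\|\nabla u\|_2^4$ part retaining a positive coefficient thanks to $\alpha>\frac{14}{3}$.
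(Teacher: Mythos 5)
Your proposal is correct and follows essentially the same route as the paper: both use $P(u)=0$ together with the lower bound $\widetilde{G}(s)\geq(\tfrac{\alpha}{2}-1)G(s)$ from (G2) and the bounds (V1)--(V2) to show $J(u)\geq\bigl(\tfrac{1}{2}(a-\sigma_1)-\tfrac{2(a+\sigma_2)}{3(\alpha-2)}\bigr)\|\nabla u\|_2^2+\bigl(\tfrac{b}{4}-\tfrac{2b}{3(\alpha-2)}\bigr)\|\nabla u\|_2^4$ on $\mathcal{P}_c$, with the quartic coefficient positive precisely because $\alpha>\tfrac{14}{3}$. The sign bookkeeping in your version is consistent with the paper's, so nothing is missing.
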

\begin{proof}Since $u\in \mathcal{P}_c$, we deduce by the assumptions (G2)  and (V2) that
\begin{equation*}
\begin{aligned}
(a+\sigma_2)\|\nabla u\|_2^2+b\|\nabla u\|_2^4&\geq a\|\nabla u\|_2^2+b\|\nabla u\|_2^4 -\int_{\R^3}W(x)u^2\mathrm{d}x\\
 &=3\int_{\R^3}\left(\frac{1}{2}g(u)u-G(u)\right)\mathrm{d}x\\
 &\geq\frac{3(\alpha-2)}{2}\int_{\mathbb{R}^{3}}G(u)\mathrm{d}x,
\end{aligned}
\end{equation*}
which, together with (V1), implies that, as  $\|\nabla u\|_2 \to +\infty$,
\begin{equation}\label{17.6}
\begin{aligned}
J(u)&=\frac{a}{2}\|\nabla u\|_2^2+\frac{b}{4}\|\nabla u\|_2^4+\frac{1}{2}\int_{\R^3}V(x)u^2\mathrm{d}x-\int_{\R^3} G(u)\mathrm{d}x\\
&\geq \frac{1}{2}(a-\sigma_1)\|\nabla u\|_2^2+\frac{b}{4}\|\nabla u\|_2^4-\int_{\R^3}G(u)\mathrm{d}x\\
&\geq\left(\frac{1}{2}(a-\sigma_1)-\frac{2(a+\sigma_2)}{3(\alpha-2)}\right)\|\nabla u\|_2^2+\left( \frac{b}{4}-\frac{2b}{3(\alpha-2)}\right)\|\nabla u\|_2^4 \to +\infty.
\end{aligned}
\end{equation}
Hence,
\begin{equation*}
\lim\limits_{u\in \mathcal{P}_c, \|\nabla u\|_2\to +\infty}J(u)=+\infty.
\end{equation*}
The proof of Lemma \ref{lm2.2} is complete.
\end{proof}
\vskip 0.1cm
\begin{proposition}\label{lm2.4}
For any critical point of $J\big|_{\mathcal{P}_{c}}$, if $(\Psi_{u})''(1)\neq0$, then there exists some $\lambda \in \R$ satisfying
\begin{equation*}
J'(u)+\lambda u=0 \quad \hbox{in} \quad H^{-1}(\mathbb{R}^3).
\end{equation*}
\end{proposition}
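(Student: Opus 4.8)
The statement is the standard ``Lagrange multiplier / natural constraint'' fact for the constraint $\mathcal{P}_c$, and the plan is to combine the Lagrange multiplier rule on $S_c\cap\mathcal{P}$ with the fibering relation $P(u)=\Psi_u'(1)$ to show that the multiplier attached to $\mathcal{P}$ must vanish. Concretely, let $u$ be a critical point of $J|_{\mathcal{P}_c}$. Since $\mathcal{P}_c=S_c\cap\mathcal{P}$ and both $S_c$ and $\mathcal{P}$ are level sets of $C^1$ functionals ($u\mapsto\|u\|_2^2$ and $P$), the Lagrange multiplier theorem yields $\mu,\nu\in\R$ with
\begin{equation*}
J'(u)+\mu u+\nu P'(u)=0\quad\text{in}\quad H^{-1}(\R^3),
\end{equation*}
provided $u$ is not a critical point of $P|_{S_c}$; this non-degeneracy is exactly where the hypothesis $(\Psi_u)''(1)\neq0$ will be used, so I would first record that $(\Psi_u)''(1)\neq0$ guarantees $P'(u)$ and $u$ are not proportional (equivalently, that $u$ is a regular point of the constraint), making the multiplier rule applicable.

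The core computation is to test the displayed identity against a well-chosen direction. The natural choice is to differentiate along the fiber, i.e.\ to pair with the generator of the flow $t\mapsto t\star u$ at $t=1$. Using $\frac{d}{dt}J(t\star u)\big|_{t=1}=P(u)=0$ (since $u\in\mathcal{P}$) on the one hand, and applying the same pairing to the equation $J'(u)+\mu u+\nu P'(u)=0$ on the other, I get
\begin{equation*}
0=P(u)+\mu\cdot\frac{d}{dt}\|t\star u\|_2^2\Big|_{t=1}+\nu\cdot\frac{d}{dt}P(t\star u)\Big|_{t=1}=0+0+\nu\,(\Psi_u)''(1),
\end{equation*}
where the middle term vanishes because $t\star u$ preserves the $L^2$-norm, and the last term equals $\nu(\Psi_u)''(1)$ because $\frac{d}{dt}P(t\star u)\big|_{t=1}=\frac{d^2}{dt^2}J(t\star u)\big|_{t=1}=(\Psi_u)''(1)$ by the remark following \eqref{sec-2-4}. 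Since $(\Psi_u)''(1)\neq0$ by hypothesis, this forces $\nu=0$, and therefore $J'(u)+\mu u=0$; renaming $\mu=:\lambda$ gives the claim.

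The step I expect to demand the most care is the justification of the Lagrange multiplier rule itself — specifically verifying that $P$ is $C^1$ on $H^1(\R^3)$ (which needs $\widetilde G\in C^1$, available from (G3), plus the growth control \eqref{1.5}–\eqref{1.51} and (V2)–(V3) to handle the $W$-term and the nonlinear term), and that at a critical point $u$ of $J|_{\mathcal{P}_c}$ with $(\Psi_u)''(1)\neq0$ the two constraint gradients $u$ and $P'(u)$ are linearly independent in $H^{-1}(\R^3)$. The cleanest way to see the latter is contrapositive: if $P'(u)=\rho\, u$ for some $\rho$, pairing with the fiber direction as above gives $(\Psi_u)''(1)=\langle P'(u),\tfrac{d}{dt}(t\star u)|_{t=1}\rangle=\rho\cdot\tfrac{d}{dt}\|t\star u\|_2^2|_{t=1}=0$, contradicting the hypothesis. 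Once these two points are in place the rest is the short pairing computation above, so I would present the independence check first and then the multiplier elimination.
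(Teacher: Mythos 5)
Your overall strategy is the same as the paper's: invoke the Lagrange multiplier rule to get $J'(u)+\lambda u+\mu P'(u)=0$, show that $\mu(\Psi_u)''(1)=0$, and conclude $\mu=0$ from the hypothesis. The final algebraic identity you use, namely $\frac{\mathrm{d}}{\mathrm{d}t}\bigl(J(t\star u)+\tfrac{\lambda}{2}\|u\|_2^2+\mu P(t\star u)\bigr)\big|_{t=1}=(1+\mu)(\Psi_u)'(1)+\mu(\Psi_u)''(1)=\mu(\Psi_u)''(1)$, is exactly the paper's.

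There is, however, a genuine gap in how you justify that this derivative vanishes. You obtain it by ``pairing the equation with the generator of the flow $t\mapsto t\star u$ at $t=1$,'' i.e.\ by testing against $\frac{\mathrm{d}}{\mathrm{d}t}(t\star u)\big|_{t=1}=\tfrac{3}{2}u+x\cdot\nabla u$. For a general $u\in H^1(\R^3)$ this is not an admissible test function: $x\cdot\nabla u$ need not lie in $L^2(\R^3)$, the curve $t\mapsto t\star u$ is not differentiable as an $H^1$-valued map, and so the chain rule $\langle F'(u),\partial_t(t\star u)|_{t=1}\rangle=\partial_t F(t\star u)|_{t=1}$ that underlies both your elimination of $\nu$ and your linear-independence check is unjustified. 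This is precisely the difficulty that Pohozaev-type identities exist to resolve. The paper's proof avoids it: from $J'(u)+\lambda u+\mu P'(u)=0$ one regards $u$ as a weak solution of a (semilinear, since $\|\nabla u\|_2^2$ is a fixed constant) elliptic equation, gains regularity, and derives the Pohozaev identity for that equation; that identity is exactly the rigorous form of the statement $\frac{\mathrm{d}}{\mathrm{d}t}\Phi(t\star u)\big|_{t=1}=0$. So your computation is the correct formal shadow of the argument, but the step you dismiss as ``the short pairing computation'' is the one that actually requires the Pohozaev machinery. On the positive side, your attention to the applicability of the multiplier rule (that $u$ and $P'(u)$ are not proportional when $(\Psi_u)''(1)\neq 0$) addresses a point the paper passes over silently, though your verification of it suffers from the same formal-pairing issue and would likewise need to be routed through a Pohozaev identity for the equation $P'(u)=\rho u$.
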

\begin{proof}
Let $u$ be a critical point of $J(u)$ restricted to $\mathcal{P}_{c}$, then by the Lagrange multipliers rule there exist $\lambda, \mu\in\R$ such that
\begin{equation}
\begin{array}{ll}\label{1.7}
J'(u)+\lambda u+\mu P^{\prime}(u)=0 \quad \text{in} \quad H^{-1}(\mathbb{R}^3).
\end{array}
\end{equation}
It remains to verify $\mu=0$.

We claim that if $u$ solves \eqref{1.7}, then $u$ satisfies
\begin{equation*}
\frac{\mathrm{d}}{\mathrm{d}t}\big(\Phi(t \star u)\big)\big|_{t=1}=0,
\end{equation*}
where
\begin{equation*}
\Phi(u):=J(u)+\frac{1}{2} \lambda \|u\|_2^2 +\mu P(u).
\end{equation*}
In fact, we observe that
\begin{equation*}
\begin{aligned}
\Phi(u)&=I(u)+\frac{1}{2}\int_{\R^3} V(x)u^2\mathrm{d}x+\frac{1}{2}\lambda\|u\|_2^2+\mu P_{\infty}(u)-\mu\int_{\R^3} W(x)u^2\mathrm{d}x\\
& =\Phi_\infty(u)+\frac{1}{2}\int_{\R^3} V(x)u^2\mathrm{d}x-\mu\int_{\R^3}W(x)u^2\mathrm{d}x,
\end{aligned}
\end{equation*}
where
\begin{equation*}
\Phi_\infty(u):=I(u) +\frac{1}{2}\lambda\|u\|_2^2 +\mu P_\infty(u).
\end{equation*}
After that, it is not difficult to verify that
\begin{equation*}
\begin{aligned}
\frac{\mathrm{d}}{\mathrm{d}t} \big( \Phi(t \star u) \big)\Big|_{t=1}
&=\frac{\mathrm{d}}{\mathrm{d}t}\big(\Phi_\infty(t \star u)\big)\Big|_{t=1}+\frac{\mathrm{d}}{\mathrm{d}t}\left(\frac{1}{2} \int_{\R^3} V(x)t^{3} u^2(tx)\mathrm{d}x\right)\Big|_{t=1}\\
&\quad -\mu\frac{\mathrm{d}}{\mathrm{d}t}\left(\int_{\R^3}W(x)t^{3}u^2(tx)\mathrm{d}x\right)\Big|_{t=1}.
\end{aligned}
\end{equation*}
By direct calculations, it is easy to see that
\begin{equation*}
\begin{aligned}
\frac{\mathrm{d}}{\mathrm{d}t}\big( \Phi_\infty(t \star u) \big)\big|_{t=1}
&=\big(a\|\nabla u\|_2^2+b\|\nabla u\|_2^4-3\int_{\R^3} \widetilde{G}(u)\mathrm{d}x\big)\\
&\quad +\mu \big(2a\|\nabla u\|_2^2+4b\|\nabla u\|_2^4+9\int_{\R^3}\widetilde{G}(u)\mathrm{d}x-\frac{9}{2}\int_{\R^3} \widetilde{G}'(u) u\mathrm{d}x\big),
\end{aligned}
\end{equation*}
\begin{equation*}
\begin{aligned}
\frac{\mathrm{d}}{\mathrm{d}t}\left(\frac{1}{2}  \int_{\R^3} V(x)t^{3} u^2(tx)\mathrm{d}x\right)\big|_{t=1}
&\overset{y=tx}{\mathop =}\,\frac{\mathrm{d}}{\mathrm{d}t}\left(\frac{1}{2}\int_{\R^3}V(\frac{y}{t})u^2(y)\mathrm{d}y  \right)\Big|_{t=1}\\
&=\left(\frac{1}{2}\int_{\R^3}\langle\nabla V(\frac{y}{t}),-\frac{y}{t^2} \rangle u^2(y)\mathrm{d}y\right)\Big|_{t=1}  \\
&=\left(\frac{1}{2}\int_{\R^3}\langle \nabla V(x),-\frac{x}{t}\rangle (t \star u )^2 \mathrm{d}x\right)\Big|_{t=1}\\
&=-\frac{1}{2}\int_{\R^3} \langle \nabla V(x),x\rangle u^2(x)\mathrm{d}x
\end{aligned}
\end{equation*}
and
\begin{equation*}
\begin{aligned}
\frac{\mathrm{d}}{\mathrm{d}t}\left(\int_{\R^3} W(x) t^{3} u^2(tx)\mathrm{d}x\right)\Big|_{t=1}
&\overset{y=tx}{\mathop =}\,\frac{\mathrm{d}}{\mathrm{d}t}\left(\int_{\R^3}W(\frac{y}{t})u^2(y)\mathrm{d}y\right)\Big|_{t=1} \\
&=\left(\int_{\R^3} \langle \nabla W(\frac{y}{t}), -\frac{y}{t^2} \rangle  u^2(y)\mathrm{d}y\right)\Big|_{t=1}\\
&=\left(\int_{\R^3} \langle \nabla W(x),-x \rangle u^2(tx)t^3 \mathrm{d}x\right)\Big|_{t=1}\\
&=-\int_{\R^3}\langle \nabla W(x),x\rangle u^2(x)\mathrm{d}x.
\end{aligned}
\end{equation*}
As a consequence, we have
\begin{equation*}
\begin{aligned}
\frac{\mathrm{d}}{\mathrm{d}t}\big(\Phi(t \star u)\big)\Big|_{t=1}
&=\big(a\|\nabla u\|_2^2+b\|\nabla u\|_2^4-3\int_{\R^3}\widetilde{G}(u)\mathrm{d}x\big)\\
&\quad +\mu\big(2a\|\nabla u\|_2^2+4b\|\nabla u\|_2^4+9\int_{\R^3}\widetilde{G}(u)\mathrm{d}x-\frac{9}{2}\int_{\R^3} \widetilde{G}'(u) u\mathrm{d}x \big)\\
&\quad -\frac{1}{2}\int_{\R^3}\langle \nabla V(x),x \rangle u^2(x)\mathrm{d}x+\mu\int_{\R^3}\langle \nabla W(x),x\rangle  u^2(x)\mathrm{d}x.
\end{aligned}
\end{equation*}

On the other hand, a solution to \eqref{1.7} must satisfy the so-called Pohozaev identity
\begin{equation*}
\begin{aligned}
&\quad a\|\nabla u\|_2^2+b\|\nabla u\|_2^4+\mu\big(2a\|\nabla u\|_2^2+4b\|\nabla u\|_2^4\big)\\
&=-\mu\left(9\int_{\R^3}\widetilde{G}(u)\mathrm{d}x-\frac{9}{2}\int_{\R^3}\widetilde{G}'(u) u\mathrm{d}x\right)+3\int_{\R^3} \widetilde{G}(u)\mathrm{d}x\\
&\quad +\frac{1}{2}\int_{\R^3}\langle \nabla V(x),x \rangle u^2(x)\mathrm{d}x-\mu\int_{\R^3}\langle \nabla W(x),x\rangle  u^2(x)\mathrm{d}x,
\end{aligned}
\end{equation*}
which implies that $\frac{\mathrm{d}}{\mathrm{d}t}\big( \Phi(t \star u) \big)|_{t=1}=0$. This completes the proof of the claim.
\vskip 0.1cm
Now we deduce by direct computations that
\begin{equation*}
\begin{aligned}
\Phi(t \star u)&=J(t \star u) +\frac{1}{2} \lambda \|u\|_2^2+\mu P(t \star u)=\Psi_{u}(t)+\frac{1}{2} \lambda\|u\|_2^2+\mu t(\Psi_{u})^{\prime}(t),
\end{aligned}
\end{equation*}
which implies that
\begin{equation*}
\begin{aligned}
\frac{\mathrm{d}}{\mathrm{d}t}\Phi(t \star u)&=(1+\mu)(\Psi_{u})'(t)+\mu t(\Psi_{u})''(t).
\end{aligned}
\end{equation*}
Since $u\in\mathcal{P}_{c}$, $P(u)=0$, we deduce by \eqref{sec-2-4} that
\begin{equation*}
\begin{aligned}
0&=\frac{\mathrm{d}}{\mathrm{d}t}\big( \Phi(t \star u) \big)\big|_{t=1} \\
&=(1+\mu)(\Psi_{u})'(1) + \mu (\Psi_{u})''(1)\\
&=(1+\mu)P(u)+\mu (\Psi_{u})''(1)=\mu (\Psi_{u})''(1).
\end{aligned}
\end{equation*}
Finally, by the fact that $(\Psi_{u})''(1)\neq 0$, we get $\mu =0$. The proof of Proposition \ref{lm2.4} is complete.
\end{proof}
\vskip 0.1cm
\begin{lemma}\label{lm3.3}
Assume that the assumptions (G1)--(G2) and (V1)--(V2) hold. Then for any $c>0$, there exists some $\bar{\delta}_c>0$ such that
\begin{equation}\label{sec-2-2}
\inf\limits_{u\in\mathcal{P}_{c}}\|\nabla u\|_2\geq\bar{\delta}_c.
\end{equation}
\end{lemma}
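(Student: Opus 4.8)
The plan is to derive a lower bound for $\|\nabla u\|_2$ directly from the Pohozaev constraint $P(u)=0$ together with the mass constraint $\|u\|_2^2=c$. First I would use the identity $P(u)=0$ in the form
\[
a\|\nabla u\|_2^2+b\|\nabla u\|_2^4=\int_{\R^3}W(x)u^2\,\mathrm{d}x+3\int_{\R^3}\widetilde{G}(u)\,\mathrm{d}x,
\]
and bound the right-hand side from above. For the potential term, (V2) gives $\int W(x)u^2\le\sigma_2\|\nabla u\|_2^2$. For the nonlinear term, (G2) (via \eqref{1.51}) yields $\widetilde{G}(s)=\tfrac12 g(s)s-G(s)\le(\tfrac\beta2-1)G(s)$, so it suffices to control $\int_{\R^3}G(u)\,\mathrm{d}x$, and by \eqref{1.5} we have $G(s)\le C_2(|s|^\alpha+|s|^\beta)$. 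Hence $\int_{\R^3}G(u)\,\mathrm{d}x\le C_2(\|u\|_\alpha^\alpha+\|u\|_\beta^\beta)$.

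Next I would apply the Gagliardo–Nirenberg inequality \eqref{G-N-I}: for $p\in\{\alpha,\beta\}\subset(\tfrac{14}{3},6)$,
\[
\|u\|_p^p\le\frac{p}{2\|Q\|_2^{p-2}}\|\nabla u\|_2^{\frac{3p-6}{2}}\|u\|_2^{\frac{6-p}{2}}
=\frac{p}{2\|Q\|_2^{p-2}}c^{\frac{6-p}{4}}\|\nabla u\|_2^{\frac{3p-6}{2}},
\]
using $\|u\|_2^2=c$. The key numerical observation is that for $p>\tfrac{14}{3}$ the exponent $\tfrac{3p-6}{2}>4$, so each nonlinear contribution is of order $\|\nabla u\|_2^{\theta}$ with $\theta>4$. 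Collecting everything, there are constants $C_3,C_4>0$ (depending on $c$) such that
\[
(a-\sigma_2)\|\nabla u\|_2^2\le a\|\nabla u\|_2^2+b\|\nabla u\|_2^4-\sigma_2\|\nabla u\|_2^2
\le C_3\|\nabla u\|_2^{\frac{3\alpha-6}{2}}+C_4\|\nabla u\|_2^{\frac{3\beta-6}{2}},
\]
where $a-\sigma_2>0$ since $\sigma_2<a$ by (V2) (indeed $\sigma_2\le\tfrac{3(\alpha-2)(a-\sigma_1)}{4}-a$ and one checks this is $<a$; alternatively combine (V1), (V2) as in Lemma \ref{lm2.2}). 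Setting $t=\|\nabla u\|_2>0$, we get $(a-\sigma_2)t^2\le C_3 t^{\frac{3\alpha-6}{2}}+C_4 t^{\frac{3\beta-6}{2}}$, and since both exponents strictly exceed $2$, dividing by $t^2$ forces $t$ to be bounded below by a positive constant $\bar\delta_c$ depending only on $a,\sigma_2,C_3,C_4$ (hence on $a$, $b$, $\alpha$, $\beta$, $C_2$, $\|Q\|_2$ and $c$). Note $\mathcal{P}_c\neq\emptyset$ should be recorded (it follows from the fiber-map analysis, e.g. Proposition \ref{bupro1} applied to any fixed $u\in S_c$), so the infimum is over a nonempty set.

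The main obstacle is essentially bookkeeping: one must make sure all the coefficients $\sigma_1,\sigma_2$ and the Gagliardo–Nirenberg constants combine so that the coefficient of $\|\nabla u\|_2^2$ on the left stays strictly positive, which is exactly what the smallness hypotheses in (V1)–(V2) guarantee; and one must use the strict inequality $\alpha>\tfrac{14}{3}$ (not merely $\alpha>\tfrac{10}{3}$, the mass-critical Kirchhoff exponent) so that the lower-order balance $t^2$ versus $t^{(3\alpha-6)/2}$ with $(3\alpha-6)/2>4$ indeed excludes $t\to0$. No genuinely hard analysis is needed beyond \eqref{G-N-I}, \eqref{1.5}, \eqref{1.51} and the definitions.
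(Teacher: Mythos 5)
Your argument is correct and is essentially the paper's own proof: both start from $P(u)=0$, absorb the $W$-term via (V2) to get $(a-\sigma_2)\|\nabla u\|_2^2+b\|\nabla u\|_2^4\le 3\int_{\R^3}\widetilde G(u)\,\mathrm{d}x\le C\bigl(\|\nabla u\|_2^{\frac{3(\alpha-2)}{2}}+\|\nabla u\|_2^{\frac{3(\beta-2)}{2}}\bigr)$ via (G2) and Gagliardo--Nirenberg, and conclude from the exponents exceeding $2$ and $a-\sigma_2>0$. One tiny correction: $a-\sigma_2>0$ follows from the branch $\sigma_2\le\frac{6-\beta}{2\beta}a<a$ of the minimum in (V2), not from $\frac{3(\alpha-2)(a-\sigma_1)}{4}-a$, which can exceed $a$ when $\alpha$ is close to $6$.
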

\begin{proof}
Since $u\in\mathcal{P}_c$, we have the following Pohozaev identity
\begin{equation}\label{17.1}
a\|\nabla u\|_2^2+b\|\nabla u\|_2^4-\int_{\R^3} W(x)u^2(x)\mathrm{d}x=3\int_{\R^3}\widetilde{G}(u\big)\mathrm{d}x.
\end{equation}
By the assumption (V2), we observe that for any $u\in H^1(\R^3)$,
\begin{equation}\label{17.2}
a\|\nabla u\|_2^2+b\|\nabla u\|_2^4-\int_{\R^3}W(x)u^2(x)\mathrm{d}x\geq(a-\sigma_2)\|\nabla u\|_2^2+b \|\nabla u\|_2^4.
\end{equation}
By the assumption (G2) and the Gagliardo-Nirenberg inequality \eqref{G-N-I}, we deduce that
\begin{equation*}
3 \ds\int_{\R^3}  \widetilde{G} (u\big) \mathrm{d}x\leq C\int_{\mathbb{R}^3}(|u|^\alpha+|u|^\beta)\mathrm{d}x\leq C(\|\nabla u\|_2^{\frac{3(\alpha-2)}{2}}+\|\nabla u\|_2^{\frac{3(\beta-2)}{2}}),
\end{equation*}
which, together with \eqref{17.1} and \eqref{17.2}, implies that
\begin{equation}\label{sec-2-1}
(a-\sigma_2) \|\nabla u\|_2^2+b\|\nabla u\|_2^4\leq C(\|\nabla u\|_2^{\frac{3(\alpha-2)}{2}}+\|\nabla u\|_2^{\frac{3(\beta-2)}{2}}).
\end{equation}
The assumptions (G2)  and (V2) give  $\frac{3(\alpha-2)}{2}, \frac{3(\beta-2)}{2}>2$ and $a-\sigma_2>0$, and then we conclude from \eqref{sec-2-1} that there exists some $\bar{\delta}_c >0$ such that
\begin{equation*}
\|\nabla u\|_2\geq \bar{ \delta}_c.
\end{equation*}
The proof of Lemma \ref{lm3.3} is complete.
\end{proof}
\vskip 0.1cm
To be much better at distinguishing the types of some critical points for $I\big|_{S_c}$ ($I$ is defined in \eqref{sec-2-I(u)}) and $J\big|_{S_c}$ ($J$ is defined in \eqref{sec-1-J(u)}), we decide to decompose $\mathcal{P}_{\infty,c}$ and $\mathcal{P}_{c}$  into the disjoint unions $\mathcal{P}_{\infty,c}=\mathcal{P}_{\infty,c}^{+}  \cup  \mathcal{P}_{\infty,c}^{-} \cup \mathcal{P}_{\infty,c}^{0}$, $\mathcal{P}_{c}=\mathcal{P}_{c}^{+}  \cup  \mathcal{P}_{c}^{-} \cup \mathcal{P}_{c}^{0}$  respectively, where
\begin{equation*}
\begin{aligned}
&\mathcal{P}_{\infty,c}^{+}:= \{ u\in \mathcal{P}_{\infty,c}: (\Psi_{\infty,u})''(1) >0\},   \quad &\mathcal{P}_{c}^{+}:= \{ u\in \mathcal{P}_{c}: (\Psi_{u})''(1) >0\}, \\
&\mathcal{P}_{\infty,c}^{-}:= \{ u\in \mathcal{P}_{\infty,c}: (\Psi_{\infty,u})''(1) <0\},  \quad &\mathcal{P}_{c}^{-}:= \{ u\in \mathcal{P}_{c}: (\Psi_{u})''(1) <0\},\\
&\mathcal{P}_{\infty,c}^{0}:= \{ u\in \mathcal{P}_{\infty,c}: (\Psi_{\infty,u})''(1) =0\}, \quad &\mathcal{P}_{c}^{0}:= \{ u\in \mathcal{P}_{c}: (\Psi_{u})''(1) =0\}.
\end{aligned}
\end{equation*}
\vskip 0.1cm
\begin{lemma}\label{lm3.4}
Assume that the assumptions (G1)--(G3) and (V1)--(V3) hold. Then $\mathcal{P}_{c}^{-}=\mathcal{P}_{c}$ is closed in $H^{1}(\R^3)$ and it is a natural constraint of $J\big|_{S_c}$.
\end{lemma}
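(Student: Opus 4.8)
\textbf{Proof proposal for Lemma \ref{lm3.4}.}

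The plan is to establish the statement in three stages: first that $\mathcal{P}_c^0=\emptyset$ (equivalently $\mathcal{P}_c^{+}=\emptyset$), so that $\mathcal{P}_c^{-}=\mathcal{P}_c$; second that $\mathcal{P}_c$ is closed in $H^1(\R^3)$; and finally that $\mathcal{P}_c$ is a natural constraint, i.e. every critical point of $J|_{\mathcal{P}_c}$ is in fact a critical point of $J|_{S_c}$.

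\emph{Step 1: $\mathcal{P}_c^0=\mathcal{P}_c^{+}=\emptyset$.} For $u\in\mathcal{P}_c$ I would write out $\Psi_u(t)=J(t\star u)$ explicitly, using $\|\nabla(t\star u)\|_2^2=t^2\|\nabla u\|_2^2$ and the change of variables $y=tx$ in the potential and nonlinearity terms, so that $(\Psi_u)'(t)=\tfrac1t P(t\star u)$ and $(\Psi_u)''(1)$ becomes an explicit combination of $\|\nabla u\|_2^2$, $\|\nabla u\|_2^4$, $\int W(x)u^2$, $\int \Upsilon(x)u^2$ (this is exactly why $\Upsilon=4W+\langle\nabla W,x\rangle$ was introduced in (V3)), and $\int(\widetilde G'(u)u-\tfrac{14}{3}\widetilde G(u))\,\mathrm{d}x$. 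On $\mathcal{P}_c$ one can use $P(u)=0$ to eliminate $a\|\nabla u\|_2^2+b\|\nabla u\|_2^4$ and rewrite $(\Psi_u)''(1)$ as a strictly negative quantity: the term $3\int(\widetilde G'(u)u-\tfrac{14}{3}\widetilde G(u))$ is strictly positive by (G3), the $b$-term contributes with a favorable sign once combined via $P(u)=0$, and the potential contributions are dominated using (V2)–(V3) together with the smallness ranges prescribed for $\sigma_2,\sigma_3$. Since $u\in\mathcal{P}_c$ forces $u\neq 0$ (Lemma \ref{lm3.3} gives $\|\nabla u\|_2\geq\bar\delta_c>0$), the strict inequality $(\Psi_u)''(1)<0$ holds for every $u\in\mathcal{P}_c$, hence $\mathcal{P}_c^0=\mathcal{P}_c^{+}=\emptyset$ and $\mathcal{P}_c^{-}=\mathcal{P}_c$. \textbf{This is the main obstacle}: getting the sign bookkeeping right so that the contributions of $\sigma_2$ and $\sigma_3$ are absorbed precisely within the stated ranges, and confirming that (G3)'s exponent $\tfrac{14}{3}$ is exactly what makes the nonlinearity term cooperate.

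\emph{Step 2: closedness.} Let $u_n\in\mathcal{P}_c$ with $u_n\to u$ in $H^1(\R^3)$. Then $\|u\|_2^2=c$, so $u\in S_c$, and by continuity of $u\mapsto P(u)$ (using (V2) to control $\int W(x)u_n^2$ via the uniform bound (V2), the Sobolev embedding, and (G1)–(G2)/\eqref{1.5} to pass to the limit in $\int\widetilde G(u_n)$) we get $P(u)=0$, so $u\in\mathcal{P}_c$. It remains to check $u\not\equiv 0$, which is immediate since $\|u\|_2^2=c>0$; alternatively $\|\nabla u\|_2\geq\bar\delta_c$ by Lemma \ref{lm3.3} passes to the limit. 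Hence $\mathcal{P}_c$ is closed.

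\emph{Step 3: natural constraint.} Let $u$ be a critical point of $J|_{\mathcal{P}_c}$. By Step 1, $u\in\mathcal{P}_c^{-}$, so $(\Psi_u)''(1)<0$, in particular $(\Psi_u)''(1)\neq 0$. Proposition \ref{lm2.4} then yields $\lambda\in\R$ with $J'(u)+\lambda u=0$ in $H^{-1}(\R^3)$, i.e. $u$ is a critical point of $J|_{S_c}$. This completes the proof that $\mathcal{P}_c$ is a natural constraint, and hence the lemma. \qed
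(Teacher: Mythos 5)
Your overall plan coincides with the paper's proof: show $(\Psi_u)''(1)<0$ for every $u\in\mathcal{P}_c$ so that $\mathcal{P}_c^{+}=\mathcal{P}_c^{0}=\emptyset$ and $\mathcal{P}_c^{-}=\mathcal{P}_c$, observe closedness, and invoke Proposition \ref{lm2.4} together with Lemma \ref{lm3.3}. Steps 2 and 3 are correct (Step 2 is in fact spelled out more carefully than in the paper, which simply asserts closedness). The problem is Step 1: you stop exactly at the point that constitutes the entire content of the lemma, declaring the sign bookkeeping to be ``the main obstacle'' without carrying it out. As written this is a genuine gap, since nothing in your text verifies that $(\Psi_u)''(1)<0$ actually holds under the stated ranges of $\sigma_2,\sigma_3$.

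For the record, the computation does close, and more cleanly than your description suggests. Starting from
\begin{equation*}
(\Psi_u)''(1)=a\|\nabla u\|_2^2+3b\|\nabla u\|_2^4+\int_{\R^3}W(x)u^2+\int_{\R^3}\langle\nabla W(x),x\rangle u^2+12\int_{\R^3}\widetilde G(u)-\frac92\int_{\R^3}\widetilde G'(u)u,
\end{equation*}
substitute $a\|\nabla u\|_2^2=-b\|\nabla u\|_2^4+\int_{\R^3} W(x)u^2+3\int_{\R^3}\widetilde G(u)$ from $P(u)=0$; condition (G3) then gives $15\int\widetilde G-\frac92\int\widetilde G'(u)u\leq -6\int\widetilde G$ (because $\frac92\cdot\frac{14}{3}=21$); substituting $P(u)=0$ a second time to write $-6\int\widetilde G=-2a\|\nabla u\|_2^2-2b\|\nabla u\|_2^4+2\int W(x)u^2$ makes the $b$-term cancel exactly (it does not merely ``contribute with a favorable sign'') and collects the potential contributions into precisely $4\int W(x)u^2+\int\langle\nabla W(x),x\rangle u^2=\int\Upsilon(x) u^2$. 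Hence $(\Psi_u)''(1)\leq-2a\|\nabla u\|_2^2+\int_{\R^3}\Upsilon_{+}u^2\leq(-2a+\sigma_3)\|\nabla u\|_2^2<0$ by (V3), $\sigma_3<2a$, and Lemma \ref{lm3.3}. Note also that, contrary to your description, (V2) and $\sigma_2$ play no direct role in dominating the potential terms here; they enter only indirectly through the lower bound $\bar\delta_c$ of Lemma \ref{lm3.3}.
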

\begin{proof}
For any $u \in \mathcal{P}_c$, we have
\begin{equation}\label{3.31}
a\|\nabla u\|_2^2=-b\|\nabla u\|_2^4+\int_{\R^3} W(x) u^2\mathrm{d}x+3\int_{\R^3}\widetilde{G}(u)\mathrm{d}x.
\end{equation}
By virtue of equality  \eqref{3.31}, the assumptions (V3), (G3) and Lemma \ref{lm3.3}, we obtain that
\begin{equation}\label{bu12.2}
\begin{aligned}
(\Psi_u)''(1)&=a\|\nabla u\|_2^2+3b\|\nabla u\|_2^4+\int_{\R^3} W(x) u^2\mathrm{d}x+\int_{\R^3}\langle\nabla W(x),x \rangle u^2\mathrm{d}x\\
&\quad +12\int_{\R^3}\widetilde{G}(u)\mathrm{d}x-\frac{9}{2}\int_{\R^3}\widetilde{G}'(u)u\mathrm{d}x \\
&=2b\|\nabla u\|_2^4+2\int_{\R^3} W(x) u^2\mathrm{d}x+\int_{\R^3} \langle \nabla W(x),x\rangle u^2\mathrm{d}x\\
&\quad +15\int_{\R^3} \widetilde{G}(u)\mathrm{d}x-\frac{9}{2}\int_{\R^3}\widetilde{G}'(u)u\mathrm{d}x\\
&\leq2b\|\nabla u\|_2^4+2\int_{\R^3} W(x) u^2\mathrm{d}x+\int_{\R^3}\langle\nabla W(x),x \rangle u^2\mathrm{d}x-6\int_{\R^3}\widetilde{G}(u)\mathrm{d}x \\
&=-2a\|\nabla u\|_2^2+4\int_{\R^3}W(x) u^2\mathrm{d}x+\int_{\R^3}\langle\nabla W(x),x \rangle u^2\mathrm{d}x\\
&\leq -2a\|\nabla u\|_2^2+\int_{\R^3}\Upsilon_{+}u^2\leq(-2a+\sigma_3)\|\nabla u\|_2^2<0,
\end{aligned}
\end{equation}
which implies that $\mathcal{P}_{c}^{+}=\mathcal{P}_{c}^{0}=\phi$.  Hence, $\mathcal{P}_{c}^{-}=\mathcal{P}_{c}$ is closed in $H^{1}(\R^3)$. By Proposition \ref{lm2.4}, we can obtain  that $\mathcal{P}_{c}$ is a natural constraint of $J\big|_{S_c}$. The proof of Lemma \ref{lm3.4} is complete.
\end{proof}
\vskip 0.1cm
\begin{remark}\label{sec-2-remark-1}
Let $\{w_n\} \subseteq \mathcal{P}^-_c$ be such that  $J(w_n) \to \widetilde{m}(c)$. Therefore, there exist two sequences $\{\lambda_n\}, \{\mu_n\} \subseteq \R$ such that,
as $n \to +\infty$,
$$J^\prime(w_n)+\lambda_nw_n+\mu_n P^\prime(w_n)\to 0 \quad \hbox{in} \quad H^{-1}(\R^3).$$
Using a similar argument as Proposition \ref{lm2.4}, we can get that,
as $n \to +\infty$,
\begin{equation}\label{bu12.3}
\mu_n (\Psi_{w_n})''(1)\to 0.
\end{equation}

By Lemma \ref{lm3.3} and \eqref{bu12.2}, we have
 $$(\Psi_{w_n})^{\prime\prime}(1)\leq (-2a+\sigma_3)\bar{\delta}_c^2<0,$$
which, together with \eqref{bu12.3}, implies that $\mu_n \to 0 $ as $n \to +\infty.$

Hence, if furthermore $\{w_n\}$ is bounded in $H^1(\R^3)$, then we obtain that, as $n \to +\infty$,
$$J^\prime(w_n)+\lambda_nw_n \to 0 \quad \hbox{in} \quad H^{-1}(\R^3).$$
\end{remark}
\vskip 0.1cm
\begin{lemma}\label{lm3.5}
Assume that the assumptions (G1)--(G3) and (V1)--(V3) hold. Then for every $u \in S_{c}$ with $c>0$, there exists a unique $t_u\in \R^{+}$ such that ${t_u} \star u \in \mathcal{P}_{c}$. Moreover, $t_u$ is the unique critical point of the function $\Psi_u(t)$, and satisfies $\Psi_{u}(t_u) =\max\limits_{t>0} J(t \star u)$.
\end{lemma}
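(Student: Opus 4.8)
The plan is to analyse the fiber map $\Psi_u(t)=J(t\star u)$ on $(0,+\infty)$ and to show that it has exactly one critical point, which is its global maximum. By the proof of Proposition \ref{bupro1}(ii) one has $(\Psi_u)'(t)=\tfrac1t P(t\star u)$, so that the critical points of $\Psi_u$ are precisely the $t\in\R^{+}$ for which $t\star u\in\mathcal{P}_c$; hence proving the existence and uniqueness of a critical point, together with the fact that it is a maximizer, yields all the assertions at once. The elementary but crucial bookkeeping is the scaling relation $(ts)\star u=s\star(t\star u)$, which gives $\Psi_{t\star u}(s)=\Psi_u(ts)$ and, differentiating in $s$ at $s=1$,
\begin{equation*}
(\Psi_{t\star u})'(1)=t\,(\Psi_u)'(t),\qquad (\Psi_{t\star u})''(1)=t^{2}\,(\Psi_u)''(t).
\end{equation*}

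For the existence of a critical point I would examine the behaviour of $\Psi_u$ as $t\to0^{+}$ and as $t\to+\infty$. Changing variables $y=tx$, one has $\|\nabla(t\star u)\|_2^2=t^2\|\nabla u\|_2^2$, $\int_{\R^3}V(x)(t\star u)^2\,\mathrm{d}x=\int_{\R^3}V(y/t)u^2(y)\,\mathrm{d}y$ (which is $O(t^2)$ and controlled by $\sigma_1t^2\|\nabla u\|_2^2$ via (V1), with $\sigma_1<a$), and $\int_{\R^3}G(t\star u)\,\mathrm{d}x=t^{-3}\int_{\R^3}G(t^{3/2}u(y))\,\mathrm{d}y$, which by \eqref{1.5} lies between constant multiples of $t^{\frac{3\alpha}{2}-3}\|u\|_{\alpha}^{\alpha}$ and $t^{\frac{3\beta}{2}-3}\|u\|_{\beta}^{\beta}$ for $t$ small, and is bounded below by $C_1t^{\frac{3\alpha}{2}-3}\int_{\{|u|\ge t^{-3/2}\}}|u|^{\alpha}\,\mathrm{d}y$ for $t$ large. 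Since $\alpha>\tfrac{14}{3}$ forces $\tfrac{3\alpha}{2}-3>4$, near $t=0$ the nonlinear contribution is $o(t^2)$, so $\Psi_u(t)\ge\tfrac12(a-\sigma_1)t^2\|\nabla u\|_2^2-o(t^2)>0$ while $\lim_{t\to0^{+}}\Psi_u(t)=0$; and as $t\to+\infty$ the same exponent inequality shows the nonlinear term overwhelms the Kirchhoff term $\tfrac b4t^4\|\nabla u\|_2^4$, so $\Psi_u(t)\to-\infty$. Consequently $\sup_{t>0}\Psi_u$ is positive, is not approached near $0$ or near $+\infty$, and is therefore attained at some interior point $t_u$, which is a critical point of $\Psi_u$.

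For uniqueness I would show that every critical point of $\Psi_u$ is a strict local maximum: if $(\Psi_u)'(t)=0$ then $t\star u\in\mathcal{P}_c$, and by Lemma \ref{lm3.4} $\mathcal{P}_c=\mathcal{P}_c^{-}$, so $(\Psi_{t\star u})''(1)<0$, whence $(\Psi_u)''(t)=t^{-2}(\Psi_{t\star u})''(1)<0$. A continuous function on an interval whose every critical point is a strict local maximum can have at most one critical point: if $t_1<t_2$ were two of them, then $\Psi_u$ would attain its minimum over $[t_1,t_2]$ at an interior point (since, $t_1$ and $t_2$ being strict local maxima, values strictly below $\Psi_u(t_1)$ and below $\Psi_u(t_2)$ are taken inside $(t_1,t_2)$), and that interior minimizer would be a critical point which is a local minimum, a contradiction. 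Hence $t_u$ is the unique critical point of $\Psi_u$, so the unique $t\in\R^{+}$ with $t\star u\in\mathcal{P}_c$, and $\Psi_u(t_u)=\sup_{t>0}\Psi_u(t)=\max_{t>0}J(t\star u)$.

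The delicate step is the asymptotic analysis as $t\to+\infty$: one must squeeze out of \eqref{1.5} a lower bound $\int_{\R^3}G(t^{3/2}u)\ge C_1 t^{\frac{3\alpha}{2}}\int_{\{|u|\ge t^{-3/2}\}}|u|^{\alpha}\,\mathrm{d}y$ whose coefficient stays bounded away from $0$ (here $\{|u|\ge t^{-3/2}\}\uparrow\{u\ne0\}$, a set of positive measure because $u\in S_c$ with $c>0$), so that after multiplication by $t^{-3}$ it genuinely dominates $\tfrac b4t^4\|\nabla u\|_2^4$; this is exactly where the mass–supercritical threshold $\alpha>\tfrac{14}{3}$ is used, and it is also what makes the situation differ from the semilinear case. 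Everything else is routine given Proposition \ref{bupro1}, Lemma \ref{lm3.4} and \eqref{sec-2-4}.
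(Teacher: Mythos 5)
Your proposal is correct and follows essentially the same route as the paper: existence of a maximizer from the asymptotics of $\Psi_u$ at $0$ and $+\infty$ (using $\alpha>\frac{14}{3}$ and (V1)--(V2)), identification of critical points with $\mathcal{P}_c$ via Proposition \ref{bupro1}, and uniqueness because Lemma \ref{lm3.4} makes every critical point a strict local maximum, so two of them would force an interior minimum. Your explicit scaling identity $(\Psi_{t\star u})''(1)=t^{2}(\Psi_u)''(t)$ and the truncated lower bound on $\int G(t^{3/2}u)$ are slightly more careful than the paper's write-up, but the argument is the same.
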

\begin{proof} Let $u\in S_{c}$. Since $u\in H^{1}(\mathbb{R}^3)$, we have $\|\nabla u\|_{2}>0$.  By the assumption (V2) and direct computations, we have
\begin{equation*}
\begin{aligned}
(\Psi_{u})'(t)&=at\|\nabla u\|_2^2+bt^3\|\nabla u\|_2^4-\int_{\R^3} W(x) t^{2} u^2(tx)\mathrm{d}x-3\int_{\R^3}  \widetilde{G}(t^{\frac{3}{2}} u(x)\big) \mathrm{d}x t^{-4} \\
&\geq (a-\sigma_2) \|\nabla u\|_2^2 t+b\|\nabla u\|_2^4 t^3-C
\left(t^{\frac{3}{2}\alpha-4}\|u\|_{\alpha}^{\alpha}+t^{\frac{3}{2}\beta-4}\|u\|_{\beta}^{\beta}\right),
\end{aligned}
\end{equation*}
where $\beta>\alpha>\frac{14}{3}$ and $a-\sigma_2>0$. It yields that $(\Psi_{u})'(t) >0 $ for $t>0$ small enough. Therefore, there exists some $t_1>0$ such that $ \Psi_u(t)$ increases in $t\in (0, t_1)$.

On the other hand, according to the assumption (V1), we obtain
\begin{equation*}
\begin{aligned}
\Psi_{u}(t)&\leq\frac{a}{2}t^2\|\nabla u\|_2^2+\frac{b}{4}t^4\|\nabla u\|_2^4+\frac{1}{2}\sigma_1 t^2\|\nabla u\|_2^2 -\int_{\R^3} G(t^{\frac{3}{2}} u(tx))\mathrm{d}x \\
&\leq\frac{a}{2}t^2\|\nabla u\|_2^2+\frac{b}{4} t^4\|\nabla u\|_2^4+\frac{\sigma_1}{2} t^2 \|\nabla u\|_2^2- t^{\frac{3}{2}\alpha-3}\|u\|_{\alpha}^{\alpha}.
\end{aligned}
\end{equation*}
Since $\alpha >\frac{14}{3}$, we can infer that $\lim\limits_{t\to+\infty} \Psi_{u}(t)=-\infty$. Hence, there exists some $t_2>t_1$ such that
\begin{equation*}
\Psi_{u}(t_2) =\max\limits_{t>0} J(t \star u).
\end{equation*}
It is clear that $(\Psi_{u})^{\prime}(t_2)=0$ and $t_2 \star u \in \mathcal{P}_{c} $ by Proposition \ref{bupro1}. We suppose to the contrary that there exists another $t_3>0$ such that $t_3 \star u \in  \mathcal{P}_{c} $. Without loss of generality, we may assmue $t_3>t_2$. Following from Lemma \ref{lm3.4}, we observe that both $t_2$ and $t_3$ are strict local maximum points of $\Psi_{u}(t)$, which implies that there exists some $t_4 \in (t_2,t_3)$ such that
\begin{equation*}
\Psi_{u}(t_4)=\min\limits_{t \in [t_2,t_3]} \Psi_{u}(t).
\end{equation*}
It follows that $(\Psi_{u})^{\prime}(t_3)=0$ and $(\Psi_{u})^{\prime\prime}(t_3) \geq 0$, which allows us to conclude that $t_4 \star u \in \mathcal{P}_{c}^{+}\cup \mathcal{P}_{c}^{0}$, a contradiction to Lemma \ref{lm3.4}. The proof of Lemma \ref{lm3.5} is complete.
\end{proof}
\vskip 0.1cm
\begin{lemma}\label{lm3.7}
There holds the following mini-max structure
\begin{equation}\label{var-m_c}
\begin{aligned}
\widetilde{m}(c):=\inf\limits_{u\in\mathcal{P}_c }J(u)=\inf\limits_{u \in \mathcal{S}_c}\max \limits_{ t>0 } J( t \star u)  >0.
\end{aligned}
\end{equation}
\end{lemma}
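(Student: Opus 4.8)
The plan is to establish the two equalities in \eqref{var-m_c} and then the strict positivity, in that order. For the identity $\inf_{u\in\mathcal{P}_c}J(u)=\inf_{u\in S_c}\max_{t>0}J(t\star u)$, the key observation is Lemma \ref{lm3.5}: for every $u\in S_c$ there is a unique $t_u\in\R^+$ with $t_u\star u\in\mathcal{P}_c$ and $\Psi_u(t_u)=\max_{t>0}J(t\star u)$. Thus, given $u\in S_c$, the value $\max_{t>0}J(t\star u)=J(t_u\star u)$ with $t_u\star u\in\mathcal{P}_c$, so $\inf_{u\in S_c}\max_{t>0}J(t\star u)\geq\inf_{v\in\mathcal{P}_c}J(v)$. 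Conversely, if $v\in\mathcal{P}_c$, then $v\in S_c$ and by Proposition \ref{bupro1}(ii) together with the uniqueness in Lemma \ref{lm3.5} we have $t_v=1$, so $J(v)=\max_{t>0}J(t\star v)\geq\inf_{u\in S_c}\max_{t>0}J(t\star u)$; taking the infimum over $v\in\mathcal{P}_c$ gives the reverse inequality. This yields the first equality.

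It remains to prove $\widetilde{m}(c)>0$. Here I would use the coercivity estimate already worked out in Lemma \ref{lm2.2} together with the lower bound on the gradient norm from Lemma \ref{lm3.3}. For $u\in\mathcal{P}_c$, the Pohozaev constraint $P(u)=0$ and the assumptions (G2), (V2) give, exactly as in the display \eqref{17.6} in the proof of Lemma \ref{lm2.2},
\[
J(u)\ \geq\ \Big(\tfrac{1}{2}(a-\sigma_1)-\tfrac{2(a+\sigma_2)}{3(\alpha-2)}\Big)\|\nabla u\|_2^2+\Big(\tfrac{b}{4}-\tfrac{2b}{3(\alpha-2)}\Big)\|\nabla u\|_2^4 .
\]
The constraints on $\sigma_1,\sigma_2$ in (V1)--(V2) and the condition $\alpha>\tfrac{14}{3}$ (so that $\tfrac{2}{3(\alpha-2)}<\tfrac14$ and the $\|\nabla u\|_2^2$ coefficient is positive) make both coefficients on the right-hand side strictly positive; call the first one $\kappa>0$. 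Then $J(u)\geq\kappa\|\nabla u\|_2^2\geq\kappa\,\bar\delta_c^{\,2}>0$ for every $u\in\mathcal{P}_c$ by Lemma \ref{lm3.3}, and taking the infimum over $\mathcal{P}_c$ gives $\widetilde{m}(c)\geq\kappa\bar\delta_c^{\,2}>0$.

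The only point requiring a little care is checking that the coefficient $\tfrac{1}{2}(a-\sigma_1)-\tfrac{2(a+\sigma_2)}{3(\alpha-2)}$ is genuinely positive under (V1)--(V2): from (V1), $\sigma_1<\tfrac{3(\alpha-2)-4}{3(\alpha-2)}a$, and from (V2), $\sigma_2\leq\tfrac{3(\alpha-2)(a-\sigma_1)}{4}-a$, which rearranges precisely to $\tfrac{2(a+\sigma_2)}{3(\alpha-2)}\leq\tfrac12(a-\sigma_1)$, and in fact the first inequality being strict makes the combined coefficient strictly positive. I expect the arithmetic verification of this positivity (and of $\tfrac{b}{4}>\tfrac{2b}{3(\alpha-2)}$, equivalently $\alpha>\tfrac{14}{3}$) to be the only mildly delicate step; everything else is a direct assembly of Lemmas \ref{lm2.2}, \ref{lm3.3}, \ref{lm3.5} and Proposition \ref{bupro1}.
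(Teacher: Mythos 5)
Your argument is essentially identical to the paper's: both equalities follow from Lemma \ref{lm3.5} exactly as you describe (every $u\in S_c$ has a unique projection $t_u\star u\in\mathcal{P}_c$ realizing $\max_{t>0}J(t\star u)$, and $t_u=1$ when $u\in\mathcal{P}_c$ already), and positivity comes from the estimate \eqref{17.6} combined with Lemma \ref{lm3.3}. One small correction to your ``delicate step'': (V2) permits $\sigma_2=\frac{3(\alpha-2)(a-\sigma_1)}{4}-a$ (the interval is closed), in which case your coefficient $\kappa=\frac{1}{2}(a-\sigma_1)-\frac{2(a+\sigma_2)}{3(\alpha-2)}$ equals zero rather than being strictly positive --- the strictness of the bound on $\sigma_1$ does not rescue this, since the bound on $\sigma_2$ already incorporates $\sigma_1$ --- so the strict inequality $\widetilde{m}(c)>0$ should rest on the quartic coefficient $\frac{b}{4}-\frac{2b}{3(\alpha-2)}>0$ (from $\alpha>\frac{14}{3}$) together with $\|\nabla u\|_2\geq\bar{\delta}_c$, which is precisely why the paper retains both terms and concludes $J(u)\geq C_1\bar{\delta}_c^2+C_2\bar{\delta}_c^4>0$.
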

\begin{proof}
For any  $u \in \mathcal{P}_c$,  by Lemma \ref{lm3.5}, we have
$$J(u)=\Psi_u(1)=\max \limits_{ t>0 } J( t \star u)\geq \inf\limits_{u \in \mathcal{S}_c } \max \limits_{ t>0 } J( t \star u),$$
which implies that
\begin{equation}\label{17.4}
\inf\limits_{u \in \mathcal{P}_c } J(u) \geq \inf\limits_{u \in \mathcal{S}_c } \max \limits_{ t>0 } J( t \star u).
\end{equation}

On the other hand,  for any $u\in \mathcal{S}_c $, by Lemma \ref{lm3.5} again, we obtain
that there exists $t_u$ such that $t_u\star u\in \mathcal{P}_c$ and $J(t_u\star u)=\max \limits_{ t>0 } J( t \star u)$.
Therefore, $$\inf\limits_{u \in \mathcal{P}_c } J(u) \leq J(t_u\star u)=\max \limits_{ t>0 } J( t \star u),$$
which implies that
\begin{equation}\label{17.5}
\inf\limits_{u \in \mathcal{P}_c } J(u) \leq \inf\limits_{u \in \mathcal{S}_c } \max \limits_{ t>0 } J( t \star u).
\end{equation}

 \eqref{17.4} and \eqref{17.5} imply that
 $$\inf\limits_{u \in \mathcal{P}_c } J(u)  = \inf\limits_{u \in \mathcal{S}_c } \max \limits_{ t>0 } J( t \star u).$$
By \eqref{17.6},   \eqref{sec-2-2} and (V2), we see
\begin{equation*}
J(u)\geq C_1\overline{\delta}_c^2+C_2\overline{\delta}_c^4>0,~~\hbox{for~any}~u\in \mathcal{P}_c.
\end{equation*}
Hence
\begin{equation*}
\widetilde{m}(c)\geq C_1\overline{\delta}_c^2+C_2\overline{\delta}_c^4>0.
\end{equation*}
The proof of Lemma \ref{lm3.7} is complete.
\end{proof}
\vskip 0.1cm
\begin{lemma}\label{lm3.8}For any $c>0$,
there holds
\begin{equation}\label{sec-2-m(c)}
\widetilde{m}(c)<m(c):=\inf\limits_{u\in\mathcal{P}_{\infty,c}}I(u)=\inf\limits_{u\in S_{c}}\max\limits_{t>0}I(t\star u).
\end{equation}
\end{lemma}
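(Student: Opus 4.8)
The plan is to exploit the sign condition $V\le 0$ on $\R^3$ furnished by (V1) (recall $\sup_{\R^3}V=0$): along any fiber $t\mapsto t\star u$ one has $J(t\star u)=I(t\star u)+\tfrac12\int_{\R^3}V(x)(t\star u)^2\,\mathrm{d}x\le I(t\star u)$, so taking maxima in $t$ and infima over $S_c$ already gives $\widetilde m(c)\le m(c)$, together with the double characterization of $m(c)$ asserted in \eqref{sec-2-m(c)} (that characterization is just Lemma \ref{lm3.7} applied to the potential-free functional $I$, which is precisely the content of \cite{HLZX}). The substantive point is the strict inequality, and for this I would test $J$ at an optimizer for $m(c)$.

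First I would import from \cite{HLZX} the limit-problem analogues of the facts already established here: Proposition \ref{bupro1}(i), Lemma \ref{lm3.5} and Lemma \ref{lm3.7} all hold verbatim for $I$ (their proofs do not see the potential), and, in addition, $m(c)$ is attained by some $w_c\in\mathcal P_{\infty,c}$. Being a least-energy solution of $-(a+b\|\nabla w_c\|_2^2)\Delta w+\lambda_\infty w=g(w)$, $w_c$ may be taken positive on $\R^3$: replacing $w_c$ by $|w_c|$ changes neither $I$ nor membership in $\mathcal P_{\infty,c}$ since $g$ is odd, and then the strong maximum principle applies because $g(s)\ge 0$ for $s\ge 0$ and $\lambda_\infty>0$ (both forced by (G2)). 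Since $w_c\in\mathcal P_{\infty,c}$, Proposition \ref{bupro1}(i) combined with the uniqueness-of-critical-point analogue of Lemma \ref{lm3.5} yields $\max_{t>0}I(t\star w_c)=I(w_c)=m(c)$.

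Next I would apply Lemma \ref{lm3.5} to $w_c\in S_c$: there is $t_0>0$ with $t_0\star w_c\in\mathcal P_c$ and $\max_{t>0}J(t\star w_c)=J(t_0\star w_c)$. Since the $\star$-action preserves the $L^2$-norm, $t_0\star w_c\in S_c$, so \eqref{var-m_c} gives
\[
\widetilde m(c)\le J(t_0\star w_c)=I(t_0\star w_c)+\tfrac12\int_{\R^3}V(x)(t_0\star w_c)^2\,\mathrm{d}x\le m(c)+\tfrac12\int_{\R^3}V(x)(t_0\star w_c)^2\,\mathrm{d}x,
\]
the last step using $I(t_0\star w_c)\le\max_{t>0}I(t\star w_c)=m(c)$. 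It then remains to check that this integral is strictly negative: since $V\le 0$ and $V\not\equiv 0$, continuity of $V$ gives a ball $B_\rho(x_0)$ on which $V<0$; as $w_c>0$ a.e., $(t_0\star w_c)(x)=t_0^{3/2}w_c(t_0x)>0$ for a.e. $x$, so the integrand is $\le 0$ everywhere and $<0$ a.e. on $B_\rho(x_0)$, whence $\int_{\R^3}V(x)(t_0\star w_c)^2\,\mathrm{d}x<0$ and $\widetilde m(c)<m(c)$.

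The single load-bearing step is the strict positivity of $w_c$; without it one only recovers $\widetilde m(c)\le m(c)$. I therefore expect the main care to go into justifying that the limit-problem least-energy solution may be chosen strictly positive (strong maximum principle on $-\Delta w+\tfrac{\lambda_\infty}{a+b\|\nabla w_c\|_2^2}\,w=\tfrac{g(w)}{a+b\|\nabla w_c\|_2^2}\ge 0$, with $\lambda_\infty>0$ read off from the Pohozaev and Nehari identities), together with the standing hypothesis $V\not\equiv 0$ (the strict inequality plainly requires it, and it is implicit throughout the paper). Everything else is a direct application of the fiber-map machinery of Section \ref{sec-preli}.
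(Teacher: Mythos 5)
Your proposal is correct and follows essentially the same route as the paper: take a positive minimizer $w$ of $m(c)$ on $\mathcal{P}_{\infty,c}$ (attainment and the double characterization being imported from \cite{HLZX}, positivity from the maximum principle), project it onto $\mathcal{P}_c$ via Lemma \ref{lm3.5}, and use $V\le 0$, $V\not\equiv 0$ together with $w>0$ to make the potential term strictly negative. The only cosmetic difference is that you spell out the $|w_c|$ replacement and the strong maximum principle in more detail than the paper, which simply invokes ``standard potential theory and maximum principle.''
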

\begin{proof}In \cite{HLZX}, the authors have shown the following facts: (1)~$\inf\limits_{u\in\mathcal{P}_{\infty,c}}I(u)=\inf\limits_{u\in S_{c}}\max\limits_{t>0}I(t\star u);$ (2)~$m(c)$ can be attained. Therefore, it is sufficient to show that $\widetilde{m}(c)<m(c)$.
Since $m(c)$ can be attained, we may let $w(x)\in \mathcal{P}_{\infty,c}$ attain $m(c)$. Following from the standard potential theory and maximum principle, we can see that $w(x)>0$ in $\R^3$. By Lemma \ref{lm3.5}, we can see that there exists $t_w>0$ such that $t_w\star w\in \mathcal{P}_c$, which, combining the fact that $V(x)\not \equiv 0$ and $\sup\limits_{x\in \R^3}V(x)=0$, implies that
\begin{equation*}
\begin{aligned}
\widetilde{m}(c)\leq J(t_w\star w)&=I(t_w\star w)+\frac{1}{2}\int_{\mathbb{R}^{3}} V(x)t_w^3w^2(t_wx)\mathrm{d}x\\
&<I(t_w\star w)\leq \max \limits_{ t>0 } I( t \star w)=I(w)=m(c).
\end{aligned}
\end{equation*}
The proof of Lemma \ref{lm3.8} is complete.
\end{proof}
\vskip 0.1cm
\begin{lemma}\label{lm12.1}
$m(c)$ is strictly decreasing with respect to $c\in (0,+\infty).$
\end{lemma}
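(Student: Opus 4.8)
The plan is to fix $0<c_1<c_2$ and to construct an admissible competitor in $S_{c_2}$ whose fiber-maximum of $I$ lies strictly below $m(c_1)$. I would first recall, from the proof of Lemma~\ref{lm3.8} (following \cite{HLZX}), that $m(c_1)$ is attained, and fix a minimizer $w\in\mathcal{P}_{\infty,c_1}$ of $m(c_1)$; I would also use, from \cite{HLZX} (equivalently, by arguing as in Lemmas~\ref{lm3.4}--\ref{lm3.5} with $V\equiv 0$), that $\rho\mapsto I(\rho\star w)$ has a unique critical point, which is therefore its strict maximum, and that this critical point is $\rho=1$ since $w\in\mathcal{P}_{\infty,c_1}$; hence $\max_{\rho>0}I(\rho\star w)=I(w)=m(c_1)$. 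Then I would set $s:=(c_2/c_1)^{1/3}>1$ and take as competitor the spatial dilation $w_s(x):=w(x/s)$. Elementary changes of variables give $\|w_s\|_2^2=s^3\|w\|_2^2=c_2$, so $w_s\in S_{c_2}$, together with $\|\nabla w_s\|_2^2=s\|\nabla w\|_2^2$ and $\int_{\R^3}G(t\star w_s)\,\mathrm{d}x=(s/t)^3\int_{\R^3}G(t^{3/2}w(y))\,\mathrm{d}y$.

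The key step is then a direct evaluation of $\Psi_{\infty,w_s}(t)=I(t\star w_s)$, followed by the substitution $\rho:=s^{1/2}t$, which maps $\R^+$ bijectively onto itself. Under it the two polynomial terms collapse to exactly $\frac{a}{2}\rho^2\|\nabla w\|_2^2+\frac{b}{4}\rho^4\|\nabla w\|_2^4$, while the nonlinear term becomes $-\,s^{9/2}\rho^{-3}\int_{\R^3}G(s^{-3/4}\rho^{3/2}w(y))\,\mathrm{d}y$. Since $s^{-3/4}\le 1$, the homogeneity bound recorded just before \eqref{1.5} (applied with scalar $s^{-3/4}$ and exponent $\beta$) yields $G(s^{-3/4}\rho^{3/2}w)\ge s^{-3\beta/4}G(\rho^{3/2}w)$, so that
\[
I(t\star w_s)\ \le\ \frac{a}{2}\rho^2\|\nabla w\|_2^2+\frac{b}{4}\rho^4\|\nabla w\|_2^4-\frac{s^{\,9/2-3\beta/4}}{\rho^{3}}\int_{\R^3}G(\rho^{3/2}w(y))\,\mathrm{d}y .
\]
Because $\beta<6$ one has $9/2-3\beta/4=\tfrac{3}{4}(6-\beta)>0$, hence $s^{\,9/2-3\beta/4}>1$; and $\int_{\R^3}G(\rho^{3/2}w)\,\mathrm{d}y>0$ for every $\rho>0$ since $w\not\equiv0$ and $G>0$ on $\R\setminus\{0\}$ by (G2). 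Comparing the displayed right-hand side with $I(\rho\star w)$ gives $I(t\star w_s)<I(\rho\star w)$ for all $t>0$, where $\rho=s^{1/2}t$. As $t\mapsto I(t\star w_s)$ is continuous, strictly positive for small $t>0$, and tends to $-\infty$ as $t\to+\infty$ (this last uses $\alpha>\tfrac{14}{3}$), its maximum is attained at some $t_0>0$; therefore $\max_{t>0}I(t\star w_s)=I(t_0\star w_s)<I(s^{1/2}t_0\star w)\le\max_{\rho>0}I(\rho\star w)=m(c_1)$. Since $w_s\in S_{c_2}$, the mini-max identity in \eqref{sec-2-m(c)} then gives $m(c_2)\le\max_{t>0}I(t\star w_s)<m(c_1)$, as required.

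The main obstacle is the correct choice of competitor: restoring the mass by the multiplicative constant $(c_2/c_1)^{1/2}$ would raise the energy rather than lower it, so the spatial dilation $w_s(x)=w(x/s)$ is the right move. With it, the assumption $\alpha>\tfrac{14}{3}$ is precisely what keeps $t\mapsto I(t\star w_s)$ of mountain-pass type (so that its maximum is attained), while $\beta<6$ is precisely what forces the surviving scaling factor $s^{\,9/2-3\beta/4}$ to exceed $1$ and thereby strictly lowers the level. All remaining manipulations are routine changes of variables together with the homogeneity estimates for $G$ already established in Section~\ref{sec-preli}.
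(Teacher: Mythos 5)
Your proof is correct. Note that the paper does not actually give an argument for this lemma: it only points to Theorem 1.1 of \cite{ZY-AM} and omits the details, so there is no in-paper proof to compare against line by line. What you have written is precisely the dilation argument of that reference, correctly adapted to the Kirchhoff functional: the competitor $w_s(x)=w(x/s)$ with $s=(c_2/c_1)^{1/3}$, the substitution $\rho=s^{1/2}t$ that makes the terms $\frac{a}{2}t^2s\|\nabla w\|_2^2$ and $\frac{b}{4}t^4s^2\|\nabla w\|_2^4$ collapse to the corresponding terms of $I(\rho\star w)$, and the surviving factor $s^{9/2-3\beta/4}=s^{3(6-\beta)/4}>1$ on the (positive, by (G2)) nonlinear term. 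The two ingredients you import --- attainment of $m(c_1)$ and $\max_{\rho>0}I(\rho\star w)=I(w)$ for a minimizer $w\in\mathcal{P}_{\infty,c_1}$ --- are exactly the facts the paper itself quotes from \cite{HLZX} in the proof of Lemma \ref{lm3.8}, so they are legitimately available; and your attention to passing the strict inequality through the supremum (via attainment of $\max_t I(t\star w_s)$ at some finite $t_0$, using $\alpha>\frac{14}{3}$) is the one point where a careless version of this argument would only yield $m(c_2)\le m(c_1)$. In short: the proposal is a complete and correct proof of the statement the paper left to the reader.
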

\begin{proof}
 A similar argument, as the proof of  Theorem 1.1 of  \cite{ZY-AM}, can be used to show this Lemma. So we omit it here.
\end{proof}
\vskip 0.1cm
\begin{lemma}\label{sec-3-JA>}
Suppose that $\{w_n\}\subseteq\mathcal{P}_c$ is a minimizing sequence for $J\big|_{\mathcal{P}_c}$ at a positive level $\widetilde{m}(c)$. Then there exists a $u\in H^{1}(\mathbb{R}^3)$ with $u\neq 0$  such that
$$ w_n\rightharpoonup u ~\hbox{in}~ H^1(\R^3),$$
\begin{equation}\label{sec-3-2}
J_{A}(u)\geq \frac{bA^{2}}{4}\|\nabla u\|_{2}^2,
\end{equation}
and $u$ solves
\begin{equation}\label{sec-3-u-equation}
-\big(a+bA^{2}\big)\Delta u+V(x)u+\lambda u=g(u)~\text{in}~\R^3
\end{equation}
for some $\lambda>0$, where $A^{2}:=\lim\limits_{n\to+\infty}\|\nabla w_{n}\|_{2}^{2}$ (up to a subsequence), and
$J_{A}(u)$ is defined as
\begin{equation}\label{sec-3-JA(u)}
J_{A}(u):=\frac{a}{2}\|\nabla u\|_2^2+\frac{bA^{2}}{2}\|\nabla u\|_2^2 +\frac{1}{2} \int_{\R^3} V(x)u^2\mathrm{d}x-\int_{\R^3}G(u)\mathrm{d}x.
\end{equation}
\end{lemma}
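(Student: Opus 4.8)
The plan is to run the standard scheme for a minimizing Palais--Smale sequence on the natural constraint $\mathcal{P}_c$. By Lemma~\ref{lm2.2}, $J\big|_{\mathcal{P}_{c}}$ is coercive, so $\{w_n\}$ is bounded in $H^1(\R^3)$; passing to a subsequence, $w_n\rightharpoonup u$ in $H^1(\R^3)$ and $\|\nabla w_n\|_2^2\to A^2$, and Lemma~\ref{lm3.3} gives $A^2\ge\bar{\delta}_c^2>0$. Since $\widetilde{G}\in C^1$ by (G3), $\mathcal{P}_c$ is a $C^1$-submanifold of $H^1(\R^3)$, so Ekeland's variational principle lets us take $\{w_n\}$ with $J'(w_n)+\lambda_n w_n+\mu_n P'(w_n)\to 0$ in $H^{-1}(\R^3)$; Remark~\ref{sec-2-remark-1} then forces $\mu_n\to 0$, hence $J'(w_n)+\lambda_n w_n\to 0$. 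Testing this against $w_n$ and using (G1)--(G2), (V1) and the boundedness of $\{w_n\}$ shows $\{\lambda_n\}$ is bounded, so $\lambda_n\to\lambda$ along a further subsequence.

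The key point is $u\neq 0$. From $P(w_n)=0$ and (V2) one has $(a-\sigma_2)\|\nabla w_n\|_2^2\le 3\int_{\R^3}\widetilde{G}(w_n)$, so by Lemma~\ref{lm3.3} the integral $\int_{\R^3}\widetilde{G}(w_n)$ stays bounded away from $0$; hence by the subcritical growth \eqref{1.5} and Lions' lemma $\{w_n\}$ does not vanish, and there are $\{y_n\}\subset\R^3$, $\delta>0$ with $\int_{B_1(y_n)}w_n^2\ge\delta$. If $\{y_n\}$ is bounded, $u\neq 0$ follows from the local compact embedding. The obstruction is $|y_n|\to\infty$, i.e. the escape of all the $L^2$-mass to spatial infinity, where $V$ vanishes by (V1); to rule it out, pick $s_n>0$ with $s_n\star\big(w_n(\cdot+y_n)\big)\in\mathcal{P}_{\infty,c}$ (the analogue of Lemma~\ref{lm3.5} for $I$, see~\cite{HLZX}) and combine $J(w_n)=\max_{t>0}J(t\star w_n)$ (Lemma~\ref{lm3.5}) with the translation invariance of $I$ and $V(\cdot+y_n)\to 0$ to obtain $\widetilde{m}(c)+o(1)=J(w_n)\ge m(c)+o(1)$, contradicting Lemma~\ref{lm3.8}. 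This step — together with the monotonicity of $m(\cdot)$ from Lemma~\ref{lm12.1}, used in the same spirit in the later stages of the argument — is the main obstacle and is exactly why the strict gap of Lemma~\ref{lm3.8} is required; in the nonlocal setting it is further complicated by the fact that $\|\nabla w_n\|_2^2$ only converges to $A^2$, so the limiting equation must carry the coefficient $a+bA^2$ rather than $a+b\|\nabla u\|_2^2$.

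Passing to the limit in $J'(w_n)+\lambda_n w_n\to 0$ — using $a+b\|\nabla w_n\|_2^2\to a+bA^2$, $\nabla w_n\rightharpoonup\nabla u$ in $L^2$, local Sobolev compactness and the growth of $g$ — shows $u$ solves \eqref{sec-3-u-equation}. For $\lambda>0$, test $J'(w_n)+\lambda_n w_n\to 0$ against $w_n$ and subtract $P(w_n)=0$; the gradient terms cancel and one is left with $\lambda_n c=-\tfrac12\int g(w_n)w_n+3\int G(w_n)-\int(V+W)w_n^2+o(1)$. Since $-\tfrac12 g(s)s+3G(s)\ge\tfrac{6-\beta}{2}G(s)\ge 0$ by (G2) and $\beta<6$, $-\int V w_n^2\ge 0$ as $V\le 0$, and $-\int W w_n^2\ge-\sigma_2\|\nabla w_n\|_2^2$, while $P(w_n)=0$ gives $\int G(w_n)\ge\tfrac{2(a-\sigma_2)}{3(\beta-2)}\|\nabla w_n\|_2^2$, the bound $\sigma_2\le\tfrac{6-\beta}{2\beta}a$ in (V2) and Lemma~\ref{lm3.3} yield $\lambda_n c\ge\varepsilon_0\bar{\delta}_c^2+o(1)$ for some $\varepsilon_0>0$, whence $\lambda>0$.

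Finally, for $J_A(u)\ge\tfrac{bA^2}{4}\|\nabla u\|_2^2$: since $u$ solves \eqref{sec-3-u-equation}, combining the identity from testing with $u$ with the Pohozaev identity of that equation (eliminating $\lambda\|u\|_2^2$) gives $(a+bA^2)\|\nabla u\|_2^2=3\int\widetilde{G}(u)+\int W u^2$. Then (G2) yields $\int G(u)\le\tfrac{2(a+\sigma_2+bA^2)}{3(\alpha-2)}\|\nabla u\|_2^2$, and with $\int V u^2\ge-\sigma_1\|\nabla u\|_2^2$ we obtain
\[
J_A(u)-\tfrac{bA^2}{4}\|\nabla u\|_2^2\ \ge\ \Big(\tfrac{a-\sigma_1}{2}-\tfrac{2(a+\sigma_2)}{3(\alpha-2)}\Big)\|\nabla u\|_2^2+\Big(\tfrac14-\tfrac{2}{3(\alpha-2)}\Big)bA^2\|\nabla u\|_2^2\ \ge\ 0 ,
\]
where the first bracket is nonnegative by the constraint $\sigma_2\le\tfrac{3(\alpha-2)(a-\sigma_1)}{4}-a$ of (V2) and the second by $\alpha>\tfrac{14}{3}$.
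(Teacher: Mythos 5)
Your proposal is correct and takes essentially the same route as the paper: coercivity of $J\big|_{\mathcal{P}_c}$ for boundedness, the multiplier argument of Remark \ref{sec-2-remark-1} to get $J'(w_n)+\lambda_n w_n\to 0$, the identity $\lambda_n c=-\langle J'(w_n),w_n\rangle$ combined with $P(w_n)=0$, (G2), (V2) and Lemma \ref{lm3.3} for $\lambda>0$, and the Pohozaev identity $P_A(u)=0$ of the limit equation for \eqref{sec-3-2}. The only divergence is the nontriviality of $u$: you invoke Lions' lemma and translate to a concentration point before projecting onto $\mathcal{P}_{\infty,c}$, whereas the paper assumes $u=0$, deduces $J(w_n)=I(w_n)+o_n(1)$ and $(\Psi_{\infty,w_n})'(1)=o_n(1)$, and projects with $t_n\to 1$ — both versions hinge on exactly the same strict gap $\widetilde{m}(c)<m(c)$ of Lemma \ref{lm3.8}.
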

\begin{proof}
Let $\{ w_n \} \subseteq \mathcal{P}_c $ be a minimizing sequence for $J\big|_{\mathcal{P}_c}$ at a positive level $\widetilde{m}(c)$. By the estimate \eqref{17.6} in Lemma \ref{lm2.2} and the fact that $\{w_n\}\subseteq\mathcal{P}_c$, it is easy to see that $\{w_n\}$ is bounded in $H^1(\R^3)$. Going to a subsequence if necessary (still denoted by $\{w_n\}$), there exists a $u\in H^1(\R^3)$ such that
\begin{equation*}
\begin{cases}
w_n \rightharpoonup u &\text{in}~H^1(\R^3),\\
w_n\to u &\text{in}~L^q_{\mathrm{loc}}(\R^3)~1\leq q<6,\\
w_n\to u &\text{a.e.~in}~\R^3.
\end{cases}
\end{equation*}

We claim that $u\neq 0$. In fact, if $u=0$, then the Br$\acute{e}$zis-Lieb Lemma and (V1) lead to
\begin{equation*}
\begin{aligned}
\int_{\R^3} V(x)w_n^2\mathrm{d}x&=\int_{\R^3} V(x)u^2\mathrm{d}x+\int_{\R^3}V(x)(w_n-u)^2\mathrm{d}x+o_n(1)=o_n(1),
\end{aligned}
\end{equation*}
which implies that $\widetilde{m}(c)+o_n(1)=I(w_n)$. Furthermore, we obtain $(\Psi_{\infty,w_n})'(1)=(\Psi_{w_n})'(1)+o_n(1)=o_n(1)$. It follows from the uniqueness  of the critical point of $\Psi_{\infty,w_n}(t)$(See Corollary 3.9, \cite{HLZX}) that there exists $t_n=1+o_{n}(1)$ such that $t_n\star w_n \in \mathcal{P}_{\infty,c}$. Hence
\begin{equation*}
m(c)\leq I(t_n \star w_n)=I(w_n)+o_n(1)=\widetilde{m}(c)+o_n(1),
\end{equation*}
which contradicts to \eqref{sec-2-m(c)}.
Therefore $u \neq 0$. This completes the proof of this claim.

Since $\{w_n\}$ is bounded in $H^1(\R^3)$, one can check that there exists a constant $C>0$ such that
\begin{equation}\label{sec-3-1}
|\lambda_n|\leq C, ~\hbox{where}~
\lambda_n:=-\frac{\langle J'(w_n),w_n\rangle}{c}.
\end{equation}
On the other hand, by the assumptions (G2), (V2) and $P(w_{n})=0$,
\begin{equation*}
\begin{aligned}
c\lambda_n&=-\langle J'(w_n), w_n  \rangle\\
&=\int_{\mathbb{R}^3} g(w_n)w_n\mathrm{d}x-\int_{\mathbb{R}^3} V(x)w_n^2\mathrm{d}x-a\|\nabla w_n\|_2^2-b\|\nabla w_n\|_2^4\\
&\geq \int_{\mathbb{R}^3} g(w_n)w_n\mathrm{d}x-a\|\nabla w_n\|_2^2-b\|\nabla w_n\|_2^4\\
&=\int_{\mathbb{R}^3} g(w_n)w_n\mathrm{d}x-\left(\int_{\mathbb{R}^3} W(x)w^2_n\mathrm{d}x+3\int_{\mathbb{R}^3}\big(\frac{1}{2}g(w_n)w_n-G(w_n)\big)\mathrm{d}x\right)\\
&=3\int_{\mathbb{R}^3} G(w_n)\mathrm{d}x-\frac{1}{2}\int_{\mathbb{R}^3} g(w_n)w_n\mathrm{d}x-\int_{\mathbb{R}^3} W(x)w_n^2\mathrm{d}x\\
&\geq\left(3-\frac{\beta}{2}\right)\int_{\mathbb{R}^3} G(w_n)\mathrm{d}x-\sigma_2 \|\nabla w_n\|_2^2
\end{aligned}
\end{equation*}
and
\begin{equation*}
\begin{aligned}
(a-\sigma_2)\|\nabla w_n\|_2^2+b\|\nabla w_n\|_2^4&\leq a\|\nabla w_n\|_2^2+b\|\nabla w_n\|_2^4-\int_{\mathbb{R}^3} W(x)w^2_n\mathrm{d}x\\
&=3\int_{\mathbb{R}^3}\left(\frac{1}{2}g(w_n)w_n-G(w_n)\right)\mathrm{d}x\\
&\leq \frac{3(\beta-2)}{2}\int_{\mathbb{R}^3} G(w_n)\mathrm{d}x.
\end{aligned}
\end{equation*}
Therefore, by the assumption (V2) and \eqref{sec-2-2}, we have
\begin{equation*}
\begin{aligned}
c\lambda_n&\geq \left(3-\frac{\beta}{2}\right)\cdot\frac{2}{3(\beta-2)}\left((a-\sigma_2) \|\nabla w_n\|_2^2+b\|\nabla w_n\|_2^4\right)-\sigma_2 \|\nabla w_n\|_2^2\\
&=\left(\left(3-\frac{\beta}{2}\right)\cdot\frac{2}{3(\beta-2)}(a-\sigma_2)-\sigma_2\right)\|\nabla w_n\|_2^2\\
&\quad +\left(3-\frac{\beta}{2}\right)\cdot\frac{2b}{3(\beta-2)}\|\nabla w_n\|_2^4\\
&\geq C_1\overline{\delta}_c^2+C_2\overline{\delta}_c^4>0.
\end{aligned}
\end{equation*}
Combining this estimation with \eqref{sec-3-1}, there exist two positive constants $C_2>C_1>0$ such that $C_1\leq \lambda_n\leq C_2.$
Hence, up to a subsequence, we have $\lambda_n \rightarrow \lambda>0$ for some $\lambda>0$.

 On the other hand, since $J^\prime(w_n)+\lambda_n w_n\to 0$ in $H^{-1}(\R^3)$ (see Remark \ref{sec-2-remark-1}), we conclude that $u$ is a nontrivial solution of
\begin{equation*}
-\left(a+bA^{2}\right)\Delta u+V(x)u+\lambda u=g(u) \quad \text{in} \quad \R^3,
\end{equation*}
where $A^{2}:=\lim\limits_{n\to+\infty}\|\nabla w_{n}\|_{2}^{2}\geq \|\nabla u\|_{2}^{2}$ (up to a subsequence). Therefore, \eqref{sec-3-u-equation} is proved.

Since $u$ is a weak solution of  \eqref{sec-3-u-equation}, $u$ satisfies the corresponding Pohozaev identity $P_{A}(u)=0$, where
\begin{equation}\label{Pohozaev-A}
P_{A}(u):=a\|\nabla u\|_2^2+bA^{2}\|\nabla u\|_2^2-\int_{\R^3}W(x)u^2\mathrm{d}x-3\int_{\R^3}\widetilde{G}(u)\mathrm{d}x.
\end{equation}
 By the assumptions (G2), (V2) and the Pohozaev identity \eqref{Pohozaev-A}, we can deduce by the assumption (V2) that
\begin{equation}\label{sec-3-4}
\begin{aligned}
(a+\sigma_2)\|\nabla u\|_2^2+bA^{2}\|\nabla u\|_2^2
&\geq a\|\nabla u\|_2^2+bA^{2}\|\nabla u\|_2^2-\int_{\R^3}W(x)u^2\mathrm{d}x\\
 &=3\int_{\R^3}\left(\frac{1}{2}g(u)u-G(u)\right)\mathrm{d}x\\
 &\geq\frac{3(\alpha-2)}{2}\int_{\mathbb{R}^3} G(u)\mathrm{d}x.
\end{aligned}
\end{equation}
Hence by the assumptions (V1)--(V2) and \eqref{sec-3-4}, we conclude that
\begin{equation*}
\begin{aligned}
&\quad J_{A}(u)-\frac{bA^{2}}{4}\|\nabla u\|_{2}^2\\
&=\frac{a}{2}\|\nabla u\|_2^2+\frac{bA^{2}}{4}\|\nabla u\|_2^2 +\frac{1}{2} \int_{\R^3} V(x)u^2\mathrm{d}x-\int_{\R^3}G(u)\mathrm{d}x\\
&\geq\left(\frac{1}{2}(a-\sigma_1)-\frac{2(a+\sigma_2)}{3(\alpha-2)}\right)\|\nabla u\|_2^2+\left( \frac{b}{4}-\frac{2b}{3(\alpha-2)}\right)A^{2}\|\nabla u\|_2^2\\
&\geq 0.
\end{aligned}
\end{equation*}
So  \eqref{sec-3-2} holds and  the proof of Lemma \ref{sec-3-JA>} is complete.
\end{proof}
\vskip 0.1cm
\begin{lemma}\label{sec-3-convergent}
Assume that the assumptions (V1)--(V3) and (G1)--(G3) hold. For any $c>0$, let $\{w_n\}\subseteq\mathcal{P}_c$ be a minimizing sequence for $J\big|_{\mathcal{P}_c}$ at a positive level $\widetilde{m}(c)$. Then there exists a subsequence of $\{w_n\}$ (still denoted by $\{w_n\}$) and a $u\in H^{1}(\mathbb{R}^3)$ with $u\neq 0$ satisfying
\begin{equation*}
w_{n}\to u \quad \hbox{in} \quad H^{1}(\mathbb{R}^3).
\end{equation*}
\end{lemma}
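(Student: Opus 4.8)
The plan is to build on Lemma \ref{sec-3-JA>} and the concentration--compactness machinery. Starting from the minimizing sequence $\{w_n\}\subseteq\mathcal{P}_c$, which is bounded in $H^1(\R^3)$ by \eqref{17.6}, we already know from Lemma \ref{sec-3-JA>} that (up to a subsequence) $w_n\rightharpoonup u\neq 0$, that $A^2:=\lim_n\|\nabla w_n\|_2^2\geq\|\nabla u\|_2^2$, that $\lambda_n\to\lambda>0$, that $u$ solves the limit equation \eqref{sec-3-u-equation}, and that $J_A(u)\geq \frac{bA^2}{4}\|\nabla u\|_2^2$. The strategy is a standard iteration of the splitting lemma: set $v_n^1:=w_n-u$; by the Brezis--Lieb lemma together with (V1) (so that the potential term sees only the $u$-part, since $V(x)v_n^2\to 0$ when $v_n\rightharpoonup 0$), one gets $J(w_n)=J_A(u)+I_A^0(v_n^1)+o_n(1)$ where $I_A^0$ is the ``potential-free, frozen-coefficient'' functional, and similarly the Pohozaev/derivative information splits so that $v_n^1$ is (asymptotically) a Palais--Smale sequence for the limit problem $-(a+bA^2)\Delta v+\lambda v=g(v)$. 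If $v_n^1\to 0$ strongly we are done; otherwise, by a vanishing--nonvanishing dichotomy (Lions' lemma), after a translation $v_n^1(\cdot+y_n^1)\rightharpoonup \omega^1\neq 0$, a nontrivial solution of the limit equation, and one peels off $\omega^1$ and repeats.

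The iteration terminates after finitely many steps $l$ because each bubble $\omega^k$ carries a fixed minimal amount of $\|\nabla\cdot\|_2^2$ (an $H^1$ ``quantum'', coming from a uniform lower bound analogous to Lemma \ref{lm3.3} for the limit problem) and the total gradient energy $A^2$ is finite; this yields exactly the decomposition \eqref{bu4.51}--\eqref{bu4.52}, i.e.
\begin{equation*}
\widetilde{m}(c)+\frac{bA^4}{4}=J_A(u)+\sum_{k=1}^{l}I_A(\omega^k), \qquad A^2=\|\nabla u\|_2^2+\sum_{k=1}^{l}\|\nabla\omega^k\|_2^2.
\end{equation*}
Then I would invoke the two energy estimates advertised in the introduction: $J_A(u)\geq\frac{bA^2}{4}\|\nabla u\|_2^2$ (this is \eqref{sec-3-2}) and $I_A(\omega^k)\geq m(\|\omega^k\|_2^2)+\frac{bA^2}{4}\|\nabla\omega^k\|_2^2$ for each $k$ — the latter because $\omega^k\in\mathcal P_{\infty,c_k}$ with $c_k:=\|\omega^k\|_2^2$ after comparing the frozen coefficient $a+bA^2$ against $a+b\|\nabla\omega^k\|_2^2$ and using that $m(c_k)=\inf_{\mathcal P_{\infty,c_k}}I$ together with $A^2\geq\|\nabla\omega^k\|_2^2$. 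Substituting these into \eqref{bu4.51} and using \eqref{bu4.52} to absorb the $\frac{bA^2}{4}(\cdots)$ terms into $\frac{bA^4}{4}$, one gets
\begin{equation*}
\widetilde{m}(c)\geq \sum_{k=1}^{l} m(c_k)\geq \sum_{k=1}^{l} m(c)\ \cdot\ (\text{something}),
\end{equation*}
and more precisely, since $m$ is strictly decreasing (Lemma \ref{lm12.1}) and $\sum_k c_k\leq c$, any single bubble already forces $\sum_k m(c_k)\geq m(c)>\widetilde m(c)$ by Lemma \ref{lm3.8}, a contradiction unless $l=0$.

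Once $l=0$, the splitting gives $\|\nabla v_n^1\|_2\to 0$, hence $\|\nabla w_n\|_2^2\to\|\nabla u\|_2^2$, i.e. $A^2=\|\nabla u\|_2^2$, and combined with $w_n\to u$ in $L^2$ (the $L^2$-mass is controlled: no $L^2$-mass escapes to infinity because all the ``bubbles'' that could carry it have been excluded) we conclude $w_n\to u$ strongly in $H^1(\R^3)$, with $u\neq 0$. The main obstacle — and the point the authors flag as the genuine novelty — is precisely the bookkeeping of the nonlocal coefficient: because the frozen coefficient in the limit equations is $a+bA^2$ rather than $a+b\|\nabla(\cdot)\|_2^2$, the weak limit $u$ need \emph{not} lie in $\mathcal P_c$ (it lies in $\mathcal P_A$, the Pohozaev set for the frozen problem), so one cannot directly say $J(u)\geq\widetilde m(c)$; the whole argument has to run through the frozen functionals $J_A$, $I_A$ and the two lower bounds $J_A(u)\geq\frac{bA^2}{4}\|\nabla u\|_2^2$ and $I_A(\omega^k)\geq m(c_k)+\frac{bA^2}{4}\|\nabla\omega^k\|_2^2$, with the identity \eqref{bu4.52} doing the crucial job of reconciling $A^2$ with the sum of the pieces. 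Verifying that $v_n^1$ really is an asymptotic Palais--Smale sequence for the frozen limit problem (so that the translated weak limits solve $-(a+bA^2)\Delta\omega+\lambda\omega=g(\omega)$ with the \emph{same} $A$ and $\lambda$) is the technical heart, and it is where conditions (G1)--(G3) on the nonlinearity and the boundedness $\lambda_n\to\lambda>0$ are used.
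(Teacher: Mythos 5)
Your overall architecture is the same as the paper's: boundedness, nontrivial weak limit from Lemma \ref{sec-3-JA>}, a Ye-type splitting into bubbles solving the frozen equation $-(a+bA^2)\Delta\omega+\lambda\omega=g(\omega)$, the two lower bounds $J_A(u)\geq\frac{bA^2}{4}\|\nabla u\|_2^2$ and $I_A(\omega^k)\geq m(\|\omega^k\|_2^2)+\frac{bA^2}{4}\|\nabla\omega^k\|_2^2$, and the final contradiction via the monotonicity of $m$ and $\widetilde{m}(c)<m(c)$. But there is a genuine gap at the second lower bound, and your stated justification for it is wrong: you assert that $\omega^k\in\mathcal{P}_{\infty,c_k}$, and it is not. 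The bubble satisfies the \emph{frozen} Pohozaev identity $P_{\infty,A}(\omega^k)=a\|\nabla\omega^k\|_2^2+bA^2\|\nabla\omega^k\|_2^2-3\int_{\R^3}\widetilde{G}(\omega^k)\,\mathrm{d}x=0$, and since $A^2>\|\nabla\omega^k\|_2^2$ (the weak limit $u\neq0$ also contributes to $A^2$), this forces $P_{\infty}(\omega^k)<0$ strictly, i.e. $\omega^k\notin\mathcal{P}_{\infty,c_k}$ --- exactly the nonlocal difficulty you correctly flag for $u$ at the end of your proposal but then overlook for the bubbles. Consequently you cannot invoke $I(\omega^k)\geq\inf_{\mathcal{P}_{\infty,c_k}}I=m(c_k)$. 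Worse, $I(\omega^k)\geq m(c_k)$ is not available by soft arguments at all: $P_{\infty}(\omega^k)<0$ means $\omega^k$ sits \emph{past} the maximum of its fiber map $t\mapsto I(t\star\omega^k)$, which tends to $-\infty$, so $I(\omega^k)$ could a priori lie below $m(c_k)$. The cheap comparison $I_A(\omega^k)\geq I(\omega^k)+\frac{bA^2}{4}\|\nabla\omega^k\|_2^2$ (which is valid, since $A^2\geq\|\nabla\omega^k\|_2^2$) therefore does not close the argument.

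The missing idea is the paper's rescaling-plus-monotonicity step, which is where (G3) is actually consumed. One first checks $P_{\infty}(t\star\omega^k)>0$ for $t>0$ small (using \eqref{1.51} and $\frac{3\alpha}{2}-3>4$), so there exists $t_{\omega^k}\in(0,1)$ with $t_{\omega^k}\star\omega^k\in\mathcal{P}_{\infty,c_k}$, whence $I(t_{\omega^k}\star\omega^k)\geq m(c_k)$. Then, using $P_{\infty,A}(\omega^k)=0$ and $P_{\infty}(t_{\omega^k}\star\omega^k)=0$, both $I_A(\omega^k)-\frac{bA^2}{4}\|\nabla\omega^k\|_2^2$ and $I(t_{\omega^k}\star\omega^k)$ can be written in the form $\frac{a}{4}\|\nabla\cdot\|_2^2+\frac{7}{4}\int_{\R^3}\bigl(\frac{3}{14}g(s)s-G(s)\bigr)\,\mathrm{d}x$, and (G3) guarantees that $s^{-2}\bigl(\frac{3}{14}g(s)s-G(s)\bigr)$ is increasing on $(0,+\infty)$, so the integrand at scale $t_{\omega^k}<1$ is dominated by the one at scale $1$. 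This gives $I_A(\omega^k)\geq I(t_{\omega^k}\star\omega^k)+\frac{bA^2}{4}\|\nabla\omega^k\|_2^2\geq m(c_k)+\frac{bA^2}{4}\|\nabla\omega^k\|_2^2$. Without this step your chain of inequalities does not go through; with it, the remainder of your argument (each $c_k\leq c$, Lemma \ref{lm12.1}, Lemma \ref{lm3.8}, and absorbing the $\frac{bA^2}{4}$ terms via \eqref{sec-3-8}) is exactly the paper's.
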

\begin{proof}
Suppose, by contradiction, that  $w_{n}\nrightarrow u$ in $H^{1}(\mathbb{R}^3)$. Applying a similar argument of Ye \cite{Y4}, there exists an $l\in\mathbb{R}$, a sequence $\{y_{n}^{k}\}\subseteq\mathbb{R}^3$ with $|y_{n}^{k}|\to+\infty$ as $n\to+\infty$ for each $1\leq k\leq l$ and nontrivial solutions $w^{1},w^{2},\cdots,w^{l}$ of the following problem
\begin{equation}\label{sec-3-5}
-\left(a+bA^{2}\right)\Delta u+\lambda u=g(u)
\end{equation}
such that
\begin{equation}\label{sec-3-11}
\widetilde{m}(c)+\frac{bA^{4}}{4}=J_{A}(u)+\sum\limits_{k=1}^{l}I_{A}(w^{k}),
\end{equation}
\begin{equation*}
\left\|w_{n}-u-\sum\limits_{k=1}^{l}w^{k}(\cdot-y_{n}^{k})\right\|_{H^{1}}\to 0,
\end{equation*}
and
\begin{equation}\label{sec-3-8}
A^{2}=\|\nabla u\|_{2}^{2}+\sum\limits_{k=1}^{l}\|\nabla w^{k}\|_{2}^{2},
\end{equation}
where
\begin{equation}\label{sec-3-IA(u)}
I_{A}(u):=\frac{a}{2}\|\nabla u\|_2^2+\frac{bA^{2}}{2}\|\nabla u\|_2^2-\int_{\R^3}G(u)\mathrm{d}x.
\end{equation}

We claim that
\begin{equation*}
I_{A}(w^{k})\geq m(\|w^{k}\|_{2}^{2})+\frac{bA^{2}}{4}\|\nabla w^{k}\|_{2}^{2},
\end{equation*}
where $1\leq k\leq l$ and $m(c)$ is defined in \eqref{sec-2-m(c)}. In fact, since $w^{k}$ is a weak solution to  \eqref{sec-3-5}, it satisfies the corresponding Pohozaev identity $P_{\infty,A}(w^{k})=0$, where
\begin{equation}\label{Pohozaev-infty-A}
P_{\infty,A}(u):=a\|\nabla u\|_2^2+bA^{2}\|\nabla u\|_2^2-3\int_{\R^3}\widetilde{G}(u)\mathrm{d}x.
\end{equation}
Hence by $P_{\infty,A}(w^{k})=0$, we can rewrite $I_{A}(w^{k})$ as
\begin{equation}\label{sec-3-6}
\begin{aligned}
I_{A}(w^{k})
&=\frac{a}{4}\|\nabla w^{k}\|_2^2+\frac{a}{4}\|\nabla w^{k}\|_2^2+\frac{bA^{2}}{2}\|\nabla w^{k}\|_2^2-\int_{\R^3}G(w^{k})\mathrm{d}x\\
&=\frac{a}{4}\|\nabla w^{k}\|_2^2+\frac{bA^{2}}{4}\|\nabla w^{k}\|_2^2+\frac{7}{4}\int_{\mathbb{R}^3}\left(\frac{3}{14}g(w^{k})w^{k}-G(w^{k})\right)\mathrm{d}x.
\end{aligned}
\end{equation}

Notice that, \eqref{sec-3-8} gives
\begin{equation*}
\begin{aligned}
P_{\infty}(w^{k})&
=a\|\nabla w^{k}\|_2^2+b\|\nabla w^{k}\|_2^4- 3\int_{\R^3} \widetilde{G}(w^{k})\mathrm{d}x\\
&< a\|\nabla w^{k}\|_2^2+bA^{2}\|\nabla w^{k}\|_2^2- 3\int_{\R^3} \widetilde{G}(w^{k})\mathrm{d}x\\
&=P_{\infty,A}(w^{k})=0.
\end{aligned}
\end{equation*}
At the same time, for $0<t<1$ sufficiently small, by \eqref{1.51} we have
\begin{equation*}
\begin{aligned}
\int_{\R^3} \widetilde{G}(t\star w^{k})\mathrm{d}x
&=\int_{\R^3} \left(\frac{1}{2}g(t\star w^{k})(t\star w^{k})-G(t\star w^{k})\right)\mathrm{d}x\\
&\leq \left(\frac{\beta}{2}-1\right) C_2 \int_{\R^3}(|t\star w^{k}|^{\alpha}+|t\star w^{k}|^{\beta})\mathrm{d}x\\
&=\left(\frac{\beta}{2}-1\right) C_2 \int_{\R^3}(t^{\frac{3\alpha}{2}-3}|w^{k}|^{\alpha}+t^{\frac{3\beta}{2}-3}|w^{k}|^{\beta})\mathrm{d}x.
\end{aligned}
\end{equation*}
So, for $0<t<1$ sufficiently small, we have
\begin{equation}\label{sec-3-9}
\begin{aligned}
P_{\infty}(t\star w^{k})&=at^{2}\|\nabla w^{k}\|_2^2+bt^{4}\|\nabla w^{k}\|_2^4- 3\int_{\R^3} \widetilde{G}(t\star w^{k})\mathrm{d}x\\
&\geq at^{2}\|\nabla w^{k}\|_2^2+bt^{4}\|\nabla w^{k}\|_2^4-3 \left(\frac{\beta}{2}-1\right) C_2 \int_{\R^3}(t^{\frac{3\alpha}{2}-3}|w^{k}|^{\alpha}+t^{\frac{3\beta}{2}-3}|w^{k}|^{\beta})\mathrm{d}x\\
&>0,
\end{aligned}
\end{equation}
since $\frac{3\alpha}{2}-3>4$ and $\frac{3\beta}{2}-3>4$. Then there exists a $t_{w^{k}}\in (0,1)$ such that $P_{\infty}(t_{w^{k}}\star w^{k})=0$. Therefore, we deduce from Proposition \ref{bupro1} that $t_{w^{k}}$ is the unique critical point of $\Psi_{\infty,u}(t)=I(t\star w^{k})$ and
\begin{equation*}
I(t_{w^{k}}\star w^{k})=\max\limits_{t>0}I(t\star w^{k}).
\end{equation*}
Hence
\begin{equation}\label{sec-3-7}
\begin{aligned}
I(t_{w^{k}}\star w^{k})
&=\frac{a t_{w^{k}}^{2}}{2}\|\nabla w^{k}\|_2^2+\frac{b t_{w^{k}}^{4}}{4}\|\nabla w^{k}\|_2^4-\int_{\R^3}G(t_{w^{k}}\star w^{k})\mathrm{d}x\\
&=\frac{a t_{w^{k}}^{2}}{4}\|\nabla w^{k}\|_2^2+\frac{a t_{w^{k}}^{2}}{4}\|\nabla w^{k}\|_2^2+\frac{b t_{w^{k}}^{4}}{4}\|\nabla w^{k}\|_2^4-\int_{\R^3}G(t_{w^{k}}\star w^{k})\mathrm{d}x\\
&=\frac{a t_{w^{k}}^{2}}{4}\|\nabla w^{k}\|_2^2+\frac{7}{4}\int_{\mathbb{R}^3}\left(\frac{3}{14}g(t_{w^{k}}\star w^{k})(t_{w^{k}}\star w^{k})-G(t_{w^{k}}\star w^{k})\right)\mathrm{d}x\\
&=\frac{a t_{w^{k}}^{2}}{4}\|\nabla w^{k}\|_2^2+\frac{7}{4}\int_{\mathbb{R}^3}t_{w^{k}}^{-3}\left(\frac{3}{14}g(t_{w^{k}}^\frac{3}{2} w^{k})(t_{w^{k}}^\frac{3}{2} w^{k})-G(t_{w^{k}}^\frac{3}{2} w^{k})\right)\mathrm{d}x.
\end{aligned}
\end{equation}
We note that the assumption (G3) implies  that $s^{-2}F(s):=\frac{3}{14}g(s)s^{-1}-G(s)s^{-2}$ increases   in $(0,+\infty)$ and decreases in $(-\infty, 0)$. Therefore
 $$(t_{w^{k}}^\frac{3}{2} w^{k})^{-2}F(t_{w^{k}}^\frac{3}{2} w^{k})\leq ( w^{k})^{-2}F( w^{k}),~~~ x\in \{x\in \R^3: w^k\neq 0\},$$
 where we have used that $t_{w^k}\in (0,1)$ and which implies that
 \begin{equation}\label{15.1}
 t_{w^{k}}^{-3}F(t_{w^{k}}^\frac{3}{2} w^{k})=(t_{w^{k}}^\frac{3}{2} )^{-2}F(t_{w^{k}}^\frac{3}{2} w^{k})\leq F( w^{k}),~~~ x\in \{x\in \R^3: w^k\neq 0\}.
 \end{equation}
 It is easy to see that  $F(0)=0$. Thus
 \begin{equation}\label{15.2}
 t_{w^{k}}^{-3}F(t_{w^{k}}^\frac{3}{2} w^{k})=0=F( w^{k}),~~~ x\in \{x\in \R^3: w^k=0\},
 \end{equation}
 which, together with \eqref{15.1} and the definition of $F(s)$, implies that
 \begin{equation}\label{15.3}
 t_{w^{k}}^{-3}\left(\frac{3}{14}g(t_{w^{k}}^\frac{3}{2} w^{k})(t_{w^{k}}^\frac{3}{2} w^{k})-G(t_{w^{k}}^\frac{3}{2} w^{k})\right)\leq \left(\frac{3}{14}g(w^{k})w^{k}-G(w^{k})\right),~~~ x\in \R^3.
 \end{equation}
 So combining with \eqref{sec-3-6}--\eqref{sec-3-7} and \eqref{15.3}, we have
\begin{equation*}
I_{A}(w^{k})\geq I(t_{w^{k}}\star w^{k})+\frac{bA^{2}}{4}\|\nabla w^{k}\|_2^2\geq m(\|w^{k}\|_{2}^{2})+\frac{bA^{2}}{4}\|\nabla w^{k}\|_{2}^{2}.
\end{equation*}
This completes the proof of the  claim.

Now we deduce from \eqref{sec-3-2}, \eqref{sec-3-11}, \eqref{sec-3-8} and Lemmas \ref{lm3.8} and \ref{lm12.1} that
\begin{equation*}
\begin{aligned}
\widetilde{m}(c)+\frac{bA^{4}}{4}
&=J_{A}(u)+\sum\limits_{k=1}^{l}I_{A}(w^{k})\\
&\geq \frac{bA^{2}}{4}\|\nabla u\|_{2}^2+lm(c)+\frac{bA^{2}}{4}\sum\limits_{k=1}^{l}\|\nabla w^{k}\|_{2}^{2}\\
&\geq m(c)+\frac{bA^{4}}{4}\\
&> \widetilde{m}(c)+\frac{bA^{4}}{4},
\end{aligned}
\end{equation*}
which is impossible. So we obtain that $w_n \to u$ in $H^1(\R^3)$.
\end{proof}
\vskip 0.2cm
\section{\bf{Proof of Theorem \ref{th1} } }\label{sec-proof-th-1}
In this section, we prove Theorem \ref{th1}.
\vskip 0.1cm
\begin{proof}[The proof of Theorem \ref{th1}:]  Let $\{w_n\}\subseteq\mathcal{P}_c$ be a minimizing sequence for $\widetilde{m}(c)$. According to Lemma \ref{sec-3-convergent},
 we obtain that there exists a subsequence of $\{w_n\}$ (still denoted by $\{w_n\}$) and a $u\in H^{1}(\mathbb{R}^3)$ with $u\neq 0$  such that
$$w_{n}\to u \quad \text{in} \quad H^{1}(\mathbb{R}^3),$$
which implies that
$J(u)=\widetilde{m}(c)$ and $u\in \mathcal{P}_c.$ Therefore, using Lemma \ref{lm3.4} and Lemma \ref{sec-3-JA>}, it is easy to see that there exists some $\lambda>0$ such that
$(u, \lambda)$ solves \eqref{problem} with  \eqref{eq:20210816-constraint}.

We complete the proof.
\end{proof}
\vskip 0.1cm
\vspace{1cm}

\end{document}